\newcommand{\IR}{\mathbb{R}}
\newcommand{\IN}{\mathbb{N}}
\newtheorem{thm}{Theorem}
\newtheorem{lem}[thm]{Lemma}
\newtheorem{prop}[thm]{Proposition}
\theoremstyle{definition}
\theoremstyle{remark}
\newtheorem{conj}[thm]{Conjecture}
\begin{document}

\title{ A   uniformly bounded complete\\ Euclidean system }

%\thanks[talk]{}

\author{ K.S. Kazarian}

\address{ Departamento de Matem\'aticas, Facultad de Ciencias, Mod. 17\\
		Universidad Aut\'{o}noma de Madrid, Madrid,
		28049, SPAIN }
\email{ kazaros.kazarian@uam.es}

\keywords{Uniformly bounded complete orthonormal system, divergence
		almost everywhere, convergence almost everywhere, representation of functions by series}
\subjclass[2000]{ 42C30, 42C05, 42A65}

\begin{abstract}{  A uniformly bounded complete orthonormal system of functions $\Theta =\{
\theta_n\}_{n=1}^{\infty},$  $ \|\theta_n\|_{L^\infty_{[0,1]} } \leq M $  is
constructed such that $\sum_{n=1}^{\infty} a_{n}\theta_{n}$
converges almost everywhere on $[0,1]$ if $\{ a_n\}_{n=1}^{\infty}
\in \, l^2$ and $\sum_{n=1}^{\infty} a_{n}\theta_{n}$ diverges a. e.
for any $\{ a_n\}_{n=1}^{\infty} \not\in \, l^2$.

Thus Menshov's theorem  on
the representation of measurable,  almost everywhere finite,
functions by almost everywhere convergent trigonometric series can not be extended to the class of uniformly bounded complete orthonormal systems. }
\end{abstract}

\maketitle

\

\

\section{ Introduction}
The history of the study of pointwise convergence of the expansions by general orthogonal complete systems goes back to the beginning of the twentieth century. Among others one can recall the example constructed by H.Steinhaus \cite{S:20},\cite{KZ:35} of a complete orthonormal system such that the expansion by the system of an integrable function diverges almost everywhere.
  An orthonormal system (ONS) $\{ \varphi_n\}_{n=1}^{\infty}$
of functions defined on a closed interval $[a,b] $ is called a convergence
system if $\sum_{n=1}^{\infty} a_{n}\varphi_{n}$ converges almost everywhere (a.e.) for
any $\{ a_n\}_{n=1}^{\infty} \in \, l^2$.
The history of studies in convergence and divergence of orthogonal series has a long story (see \cite{KZ:35},\cite{A:61},\cite{Ul:64}).
P.L. Ul'yanov (see \cite{Ul:64}) posed various problems in this area which stimulated research in this area. Particularly, B.S. Kashin \cite{Ka:76} responding to a problem posed in \cite{Ul:64} prove that
there exists a complete ONS $\{
\phi_n\}_{n=1}^{\infty}  $ of
functions defined on $[0,1] $ which is a convergence system and for any $\{ a_n\}_{n=1}^{\infty} \not\in \, l^2$ the series $\sum_{n=1}^{\infty} a_{n}\varphi_{n}$ diverges on some set of positive measure.
An ONS $\{ \varphi_n\}_{n=1}^{\infty}$
of functions defined on a closed interval $[a,b] $ is called a
  divergence system if the series
$\sum_{n=1}^{\infty} a_{n}\varphi_{n}$ diverges
a.e. on  $[a,b]$ for any $\{ a_n\}_{n=1}^{\infty} \not\in \, l^2$.

Another problem posed in (\cite{Ul:64},p.695) asks if there exists a complete ONS which is simultaneously a convergence and a divergence system.
B.S. Kashin indicated in \cite{Ka:76} that this problem remains open. An affirmative answer was given by the author \cite{K:06}.\newline
An ONS $\{ \varphi_n\}_{n=1}^{\infty}$ is called a  Euclidean system if it is both a convergence and a divergence system.

  A system of functions $\{ \phi_n\}_{n=1}^{\infty}$ defined on $[a,b] $ is called uniformly bounded if there exists $M>0$ such that
\[
\|\phi_n\|_{L^\infty_{[a,b]}} \leq M \quad\mbox{for all}\quad n\in
\IN.
\]
In the present paper we construct a  uniformly bounded complete Euclidean system.
  We prove the following
\begin{thm}\label{t:1}
 For any $M > 1+ \sqrt{2}$ there exists a complete  Euclidean system $\Theta = \{\theta_{n}\}_{n=1}^{\infty}$ in $L^2_{[0,1]}$  such that
 \begin{equation}\label{ub:1}
\| \theta_n\|_{L^\infty_{[0,1]}} \leq M \quad\mbox{for all}\quad n\in
\IN.
\end{equation}
\end{thm}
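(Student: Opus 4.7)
The plan is to construct $\Theta$ as a concatenation of finite orthonormal blocks $B_1, B_2, \ldots$, where each $B_n$ is a finite family of functions on $[0,1]$ uniformly bounded by $M$, mutually orthogonal across blocks, and designed to serve three purposes simultaneously: completeness of the full system, the convergence-system property, and the divergence-system property. To secure completeness, I would fix a countable dense family $\{f_k\} \subset L^2_{[0,1]}$ and tolerances $\varepsilon_k \downarrow 0$, and arrange the induction so that after appending $B_n$ the linear span of $B_1 \cup \cdots \cup B_n$ approximates each of $f_1, \ldots, f_n$ to within $\varepsilon_n$ in $L^2$. Orthogonality to the already-constructed blocks would be imposed through a Gram--Schmidt step, with the uniform bound preserved by carefully controlling the $L^\infty$ cost of this orthogonalization.

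The heart of the construction lies in decomposing each $B_n$ into two sub-blocks. The first, the \emph{convergence} sub-block, carries the approximation task of the inductive step and is engineered so that the partial sums satisfy a Menshov--Rademacher type maximal inequality block by block, which in turn yields a.e.\ convergence for every $l^2$ input after summation over blocks. The second, the \emph{divergence} sub-block, is a uniformly bounded ONS $\{\psi_1,\ldots,\psi_{N_n}\}$ with $\|\psi_j\|_\infty \le M$ for which there exist normalized coefficients $\{c_j\}$ such that $\max_k \bigl|\sum_{j\le k} c_j \psi_j(x)\bigr| \ge \lambda_n$ on a set of measure $\ge 1-\delta_n$, with $\lambda_n \to \infty$ and $\delta_n \to 0$. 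The constraint $M > 1+\sqrt{2}$ should enter precisely at this stage: it is the sharp $L^\infty$-cost of producing such a ``bounded Kolmogorov block'' from pairwise orthogonal building pieces of the form $\alpha\chi_A - \beta\chi_B$ (or rotations thereof) with prescribed integrals and mass concentration, and it is what distinguishes the present theorem from classical divergence constructions, where the functions are allowed to blow up.

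Given the blocks, the convergence-system property follows by splitting any partial sum of $\sum a_n\theta_n$ into complete-block contributions plus a tail within the current block, and bounding the tail by a maximal operator that is $L^2$-controlled through the uniform bound on the $\theta_n$. The divergence-system property is more delicate: for a non-$l^2$ sequence $\{a_n\}$, one identifies infinitely many blocks on which the restriction of $\{a_n\}$ carries at least a fixed amount of $l^2$-mass, uses the divergence sub-block in each such block to generate oscillation of size $\gtrsim \lambda_n$ on a set of measure $\ge 1-\delta_n$, and then upgrades this to a.e.\ divergence by a Borel--Cantelli argument with $\sum \delta_n < \infty$. The main obstacle is exactly the quantitative construction of the divergence sub-block under the uniform bound $M$, since uniform boundedness competes head-on with the concentration of $L^2$-mass that drives classical Kolmogorov-type divergence; the crux is therefore a sharp geometric lemma producing the required maximal-partial-sum lower bound from an ONS with sup-norm bounded by $M > 1+\sqrt{2}$.
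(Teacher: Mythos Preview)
Your proposal has a genuine gap in the divergence argument. You design each divergence sub-block so that \emph{one specific} normalized vector $(c_j)$ produces a large maximal partial sum on a set of measure $\ge 1-\delta_n$. But the divergence-system property demands that the series diverge a.e.\ for \emph{every} non-$l^2$ sequence $(a_n)$. Your bridging step---``identify infinitely many blocks where the restriction of $(a_n)$ carries at least a fixed amount of $l^2$-mass''---does not close this gap: the restriction of $(a_n)$ to block $n$ need have nothing to do with the special $(c_j)$, and there is no mechanism in your outline forcing large oscillation for \emph{arbitrary} coefficients of unit $l^2$-mass on the block. (Indeed, for any finite ONS block the Menshov--Rademacher inequality bounds the $L^2$ norm of the maximal operator by $C\log N$, so most coefficient vectors produce only moderate oscillation.) The paper handles this by a concrete construction in which each block $\{\Upsilon_j^m\}_{j=1}^{2^{2m}+1}$ is built from Menshov-type functions $g_m^i$ and an auxiliary $\xi_m$ via the matrix $\mathcal{K}_N$; the divergence proof (Theorem~\ref{t:12}) then splits into cases according to the size of the projection coefficient $\beta_m$ relative to $\mathcal{M}_m$, and in each case exploits the explicit Menshov structure to extract oscillation for the \emph{given} $(a_n)$, not for a pre-selected vector.

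You have also misplaced the role of the constant $1+\sqrt{2}$. In the paper it has nothing to do with building a bounded Kolmogorov block; it enters only at the final ``dissolution'' step, where each (bounded but not uniformly bounded) $\Upsilon_j$ is mixed with a pack of Rademacher functions through a Haar matrix $H_{n_j}$. The iteration $\lambda \mapsto 2^{-1/2}(1+\lambda)$ has fixed point $1+\sqrt{2}$, which is why any $M>1+\sqrt{2}$ suffices for the output bound. The divergence mechanism itself is carried by the Menshov functions and is independent of $M$. Finally, your plan to preserve the uniform bound through Gram--Schmidt is not viable as stated: Gram--Schmidt against arbitrary predecessors generically destroys $L^\infty$ control, and the paper avoids this entirely by working inside nested dyadic step-function spaces $\mathcal{E}^k_{[0,1]}$, where orthogonality across stages is built into the construction rather than imposed afterwards.
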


As an immediate corollary we obtain that  Menshov's theorem \cite{M:41} on
the representation of measurable,  almost everywhere finite,
functions by almost everywhere convergent trigonometric series can not be extended to the class of uniformly bounded complete orthonormal systems. Moreover, the system $ \Theta$ is not a
representation system for the classes $L_{[0,1]}^r, 0\leq r<2$ if
we want to represent the functions from those classes by a series
 which converges  pointwise on sets of
positive measure, even if those sets depend on the function. Other corollaries of Theorem \ref{t:1} can be find in \cite{K:04}.

In the theory of general orthogonal series uniformly bounded ONS are one of the main objects that have been studied systematically. In the survey article \cite{Ul:64}   Ul'yanov posed the problem of the existence of  complete uniformly bounded convergence
system. It was motivated by the known open problem about the a.e. convergence of the Fourier series. Giving an answer to Ul'yanovs problem Olevskii \cite{O:66} constructed such a convergence system. The idea of the construction can be described as follows. At first step construct a complete ONS of bounded functions which can be divided into two convergence systems such that  the second one is uniformly bounded. In the construction it is the Rademacher system. Afterwards any element of the first convergence system is ``dissolved" by the Rademacher functions in such a way that the resulting functions are uniformly bounded. This process is performed by special orthogonal matrices. Those special matrices afterwards were used for various constructions. It created among some experts an impression that those matrices are remarkable by themselves. Probably this believe do not permit to some group of experts to admit that those matrices were known in applied mathematics much earlier as Haar matrices.
  We  will return to the Haar matrices later on. The important novelty in Olevskii's construction was
   the idea of dissolution by orthogonal transformations of ``bad" elements of a complete ONS by ``good" ones. Of course, at first one should be able to obtain a CONS for which such a construction can be applied. It should be mentioned that the idea of sticking together orthogonal functions by some orthogonal transformations was applied earlier by V. Kostitzin \cite{Ko:13}. We should also mention
 L.Carleson's \cite{C:64} famous article where it was proved that the trigonometric system is a convergence system.

Let us  explain what we understand by saying that some function is ``dissolved" by the Rademacher functions. Moreover, it is done in a such  way that the resulting functions are uniformly bounded.
Let $M > 1+ \sqrt{2}$ and
 suppose we have two functions $\phi_0, \phi_1$ such that
 $$
\|\phi_0\|_{\infty} =\lambda \geq M\qquad \mbox{
and}\quad \|\phi_1\|_{\infty}\leq 1,
$$
then the orthogonal
transformation of those functions by the matrix
\begin{equation}\label{m:1}
A_0 = \left(
\begin{array}{cc} { \frac1{\sqrt{2}}} & { \frac1{\sqrt{2}}}
\\
\frac1{\sqrt{2}} & -\frac1{\sqrt{2}}
\end{array} \right)
\end{equation}
will give two functions $\psi_i = \frac1{\sqrt{2}}( \phi_0
+(-1)^{i-1} \phi_1)\, (i=1,2)$ such that
\[
\|\psi_i\|_{\infty} \leq 2^{-1/2}(1+\lambda) < \lambda \qquad
i=1,2.
\]
Repeating the process for the pairs $(\psi_1, \phi_3)$, $(\psi_2,
\phi_4),$ where
$$\|\phi_i\|_{\infty}\leq 1\, (i=3,4)$$
and, if
necessary,   for the obtained new functions one can easily check
that  on some step the obtained functions will have
$L^\infty_{[0,1]}-$norm less than $M.$ If we have an
infinite subsystem of functions uniformly bounded by $C>0$ then
the same process will give functions with $L^\infty_{[0,1]}-$norm
less than $C\cdot M.$
It seems that the solution of the following conjecture needs some new ideas.
 \begin{conj}
 There is no  complete Euclidean system  $\{ \varphi_n\}_{n=1}^{\infty}$ in $L^2_{[0,1]}$ such that
 \[
 |\varphi_n(t)| = 1 \qquad \mbox{a.e. on}\, [0,1]\qquad \mbox{for any}\, n\in \mathbb{N}.
 \]
\end{conj}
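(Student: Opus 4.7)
The plan is to establish the conjecture by showing that the constraint $|\varphi_n(t)|=1$ a.e., together with completeness, forces enough internal structure on $\{\varphi_n\}$ to prevent it from being a divergence system. I would begin by reducing to the real case where $\varphi_n:[0,1]\to\{-1,+1\}$, so that each $\varphi_n=\chi_{A_n}-\chi_{A_n^c}$ for a Borel set $A_n\subset[0,1]$ and orthonormality becomes the combinatorial condition $|A_n\triangle A_m|=\tfrac{1}{2}$ for all $n\neq m$. The complex, unimodular case $\varphi_n(t)=e^{i\alpha_n(t)}$ should reduce to this after a routine decomposition into real and imaginary parts and a finite-block Gram--Schmidt clean-up.

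Next, I would use completeness of $\{\varphi_n\}$ to extract a subsystem $\{\varphi_{n_k}\}_{k=1}^{\infty}$ that behaves, up to small errors, like the Rademacher system. The heuristic is that completeness forces the family $\{A_n\}$ to generate (modulo null sets) the entire Borel $\sigma$-algebra of $[0,1]$, so one should be able to select indices $n_k$ for which all higher correlations $\int\varphi_{n_{k_1}}\varphi_{n_{k_2}}\cdots\varphi_{n_{k_j}}\,dt$ are arbitrarily small, by a diagonal/Ramsey-type argument on finite blocks. With such a near-independent subsystem in hand, I would invoke Khintchine-type inequalities and a coupling with genuine Rademacher sums to construct a sequence $\{a_{n_k}\}\notin\ell^2$, supported on the subsequence, for which the partial sums $\sum_{k=1}^{N}a_{n_k}\varphi_{n_k}$ remain bounded, or even converge, on a set of positive measure. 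Padding this sequence by zeros off the subsequence would contradict the divergence-system property of the full system.

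The main obstacle, and the reason the author flags this conjecture as requiring new ideas, is the subsystem-extraction step. The pairwise condition $|A_n\triangle A_m|=\tfrac{1}{2}$ does not control triple or higher intersections: one can have $|A_n\cap A_m\cap A_k|$ take almost any admissible value, and completeness only constrains the collective behavior of the whole system, not individual higher correlations. To force a near-independent subsystem, one would need a structural selection theorem for Boolean $L^2$-orthonormal bases that does not appear to be available in the literature. Alternatively, one might try to bypass the extraction and produce a direct obstruction: exhibit, using only $|\varphi_n|=1$ a.e.\ and completeness, some $\{a_n\}\notin\ell^2$ for which $\sum a_n\varphi_n$ converges on a positive-measure set. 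Either route appears to demand genuinely new techniques, and I do not see a short path to either.
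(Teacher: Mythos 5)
This statement is not proved in the paper: it is stated as a \emph{conjecture}, and the author explicitly remarks that its solution ``needs some new ideas.'' So there is no proof of the paper's to compare yours against, and your proposal must stand on its own. It does not: you yourself flag the subsystem-extraction step as an unfilled gap, and that alone means this is a program, not a proof.

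More importantly, the strategy is aimed in the wrong direction even granting the extraction step. If you succeed in selecting a subsystem $\{\varphi_{n_k}\}$ whose higher correlations are negligible, i.e.\ one that couples to a genuine Rademacher sequence, then for any $\{a_{n_k}\}\notin\ell^2$ the partial sums $\sum_{k=1}^{N}a_{n_k}\varphi_{n_k}$ would diverge a.e., not stay bounded on a set of positive measure: for independent symmetric $\pm1$ variables this is the classical converse to Khintchine's inequality (Kolmogorov's theorem; the Rademacher system is itself a divergence system, a fact the paper relies on via \cite{Zy:59}). So a near-independent subsystem is evidence \emph{for} the divergence property, not against it; to contradict it you would need the opposite of independence, namely strong correlations producing cancellation in the partial sums on a positive-measure set, and nothing in $|A_n\triangle A_m|=\tfrac12$ plus completeness visibly supplies that. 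Two further, smaller issues: the reduction of the unimodular complex case $\varphi_n=e^{i\alpha_n}$ to the $\pm1$ case is not routine, since taking real and imaginary parts destroys both unimodularity and orthonormality and a ``Gram--Schmidt clean-up'' reintroduces non-unimodular functions; and your argument nowhere uses the convergence-system half of the Euclidean hypothesis, which is suspicious, since a complete ONS of $\pm1$-valued divergence systems is not obviously impossible without it. The honest conclusion of your own write-up --- that no short path is visible --- is the correct one; the conjecture remains open.
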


 The present paper consists of $5$ sections. In  Section \ref{dar:1} are given
definitions and auxiliary results many of which can be consulted in the previous papers \cite{K:04} and \cite{K:06}  of the
author. In Section \ref{sim:1} we repeat the construction of two auxiliary complete orthonormal systems from \cite{K:06}, where it was proved that those systems are convergence
 systems. At the end of Section \ref{sim:1} a Euclidean system
 $\{\Upsilon_{k}(x)\}_{k=1}^{\infty}$
is constructed such that by adding a subsystem of the  Rademacher
functions to $\{\Upsilon_{k}(x)\}_{k=1}^{\infty}$ we will obtain a
complete orthonormal system. In Section \ref{ubos:1} one can find
the proof that $\{\Upsilon_{k}(x)\}_{k=1}^{\infty}$ is a system of
divergence. Moreover, we prove an essentially stronger result (see
Theorem \ref{t:12}) which is fundamental for the proof of Theorem
\ref{t:1}. That the system $\Theta$ is a convergence system follows immediately
from Proposition \ref{pro:1} and from the construction of the
auxiliary system $\{\chi_{k}(x)\}_{k=1}^{\infty}$.

\

\section{  Definitions and auxiliary results }\label{dar:1}

We repeat some notations from  \cite{K:04}.
   For  $[a,b]\subset \IR$ and $k\in \IN$ let
\[
 {\mathcal E}^k_{[a,b]} =\left\{ f: f(x) = a_i\, \mbox {if }\,
x\in \left( a+ (i-1){\frac{b-a}{2^k}}, a+
i{\frac{b-a}{2^k}}\right) \right\},
\]
 where $ 1\leq i\leq 2^k$ and
the inner product is defined in the same way as in $L^2_{[a,b]}.$
Further we ignore the values at the points of
discontinuity of functions from ${\mathcal E}^k_{[a,b]} $.
 In the paper we will use also the following notation:
$$ {\mathcal
E}^{j}_{[a,b]} = {\mathcal
E}^{k}_{[a,b]}\stackrel{L^2_{[a,b]}}{\oplus }{\mathcal
E}^{k,j}_{[a,b]},$$ where $j>k.$ In what follows we will denote by
$I_E({ x})$ the characteristic function of a measurable set $E$.

One of the main tools in our
construction will be the Menshov functions $M_k,k\geq 3$ which are
odd $2-$periodic functions defined  on the real line $\mathbb{R}$
and $M_k\in {\mathcal E}^{k+1}_{[-1,1]}.$
 For any natural $k\geq 3$ we define $M_k\in {\mathcal
E}^{k+1}_{[-1,1]}$  to be an odd $2-$periodic function  on the
real line $ {\IR}$ satisfying to the following equations:
\[
M_k(x)=\left\{\begin{array}{ll} {\frac1{8i}}\cdot2^{\frac{k}2},\quad  &{\mbox {if }}\quad x\in (\frac{i-1}
{2^{k}},\frac{i} {2^{k}})\quad 1\leq |i|\leq 2^k -1,\\ \qquad
0, &\mbox{if }\quad x\in (-{\frac {1} {2^{k}}},0)\cup ( 1-{
\frac{1} {2^{k}}},1).
\end{array}\right.
\]
Denote $ M_{k,i}(x)= M_k(x-i\cdot
2^{-k})$ where $ i\in  \IN. $\newline
The following lemma was proved in \cite{K:04}.

\begin{lem}\label{lem:1} For any $k\in {\bf {N}} $ there exist an orthonormal system
 $\{ f_k^i\}_{i=0}^{2^k-1}$ in $L^2_{[-2,2]}$ such that
 \[
f_k^i(x)=\left\{\begin{array}{ll}
   M_{k,i}(x)
,\quad &{\mbox {if }}\quad x\in [-1,1];\\
 0, \qquad &{\mbox {if }}\quad x\in [-2,-1);
\end{array}\right.
\]
\begin{equation}\label{eq:4}
\int_{-2}^{2} f_k^i(x) dx = 0\quad \mbox{for all}\quad 0\leq i
\leq 2^k-1,
\end{equation}
and $f_k^i|_{[1,2]}\in {\mathcal
E}^{k+1}_{[1,2]}.$
\end{lem}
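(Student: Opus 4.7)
My plan is to construct $g_k^i := f_k^i|_{[1,2]}$ by a Gram-matrix extension argument. Let $G \in \IR^{2^k \times 2^k}$ be the Gram matrix $G_{ij} := \langle M_{k,i}, M_{k,j}\rangle_{L^2[-1,1]}$. Using the $2$-periodicity of $M_k$, a change of variables in the integral shows that $G$ is symmetric Toeplitz, $G_{ij} = r((j-i)/2^k)$ with $r(t) := \int_{-1}^1 M_k(x) M_k(x-t)\,dx$. In particular each diagonal entry equals $\|M_k\|_{L^2[-1,1]}^2 = \frac{1}{32}\sum_{j=1}^{2^k - 1} j^{-2} < \pi^2/192 < 1$. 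The key technical step is the operator-norm bound $G \preceq I$; once this is in hand, $D := I - G$ is positive semidefinite and the remainder of the construction is mechanical.

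For the spectral bound I would Fourier-expand $M_k(x) = \sum_n c_n e^{i\pi n x}$, giving
\[
\Big\|\sum_i u_i M_{k,i}\Big\|_{L^2[-1,1]}^2 = 2\sum_n |c_n|^2\,|\tilde u(n)|^2, \qquad \tilde u(n) := \sum_i u_i e^{-i\pi n i/2^k}.
\]
Because $\tilde u$ has period $2^{k+1}$ in $n$ and discrete Plancherel gives $\sum_{m=0}^{2^{k+1}-1} |\tilde u(m)|^2 = 2^{k+1}\|u\|_{\ell^2}^2$, one obtains $\lambda_{\max}(G) \leq 2^{k+2}\,\max_{0 \leq m < 2^{k+1}} S(m)$, where $S(m) := \sum_{s \in \IZ} |c_{m+2^{k+1}s}|^2$ is the aliased power spectrum. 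The weights $\tfrac{1}{8i}\cdot 2^{k/2}$ in the definition of $M_k$ are tuned so that summation by parts on the piecewise-constant formula gives Fourier-coefficient decay $|c_n|^2 \leq C\min(2^{-k},\,2^k n^{-2})$; summing this aliased series then yields $\max_m S(m) \leq 2^{-k-2}$, hence $G \preceq I$.

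Granted $D \succeq 0$, factor $D = B^\top B$ with $B \in \IR^{N \times 2^k}$, $N := \operatorname{rank}(D) \leq 2^k$. The zero-integral subspace of $\mathcal{E}^{k+1}_{[1,2]}$ has dimension $2^{k+1} - 1 \geq 2^k \geq N$, so I can pick an orthonormal family $e_1,\ldots,e_N$ inside it (for instance, suitable Haar wavelets supported in $[1,2]$). Define $g_k^i(x) := \sum_{j=1}^N B_{ji}\,e_j(x)$ on $[1,2]$ and
\[
f_k^i(x) := M_{k,i}(x)\,I_{[-1,1]}(x) + g_k^i(x)\,I_{[1,2]}(x).
\]
Then $\langle f_k^i, f_k^j\rangle_{L^2[-2,2]} = G_{ij} + D_{ij} = \delta_{ij}$; the vanishing on $[-2,-1)$ and the condition $f_k^i|_{[1,2]} \in \mathcal{E}^{k+1}_{[1,2]}$ are built in; and \eqref{eq:4} holds because $\int_{-1}^1 M_{k,i}\,dx = 0$ (by $2$-periodicity and the vanishing mean of $M_k$) together with $\int_1^2 g_k^i\,dx = 0$ (since each $e_j$ has zero mean). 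The main obstacle is the spectral bound $G \preceq I$: a naive Cauchy--Schwarz row-sum estimate yields only $\sum_j |G_{ij}| \leq 2^k \|M_k\|^2$, which grows with $k$, and it is the Fourier/aliasing identity---leaning on the specific $\tfrac{1}{8i}$ weights that make Menshov's construction work---that is fine enough to close the gap.
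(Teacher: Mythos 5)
Your Gram--completion scheme is structurally sound, and the ``extension'' half of it is essentially forced by the statement: once one knows that the Gram matrix $G$ of $\{M_{k,i}I_{[-1,1]}\}_{i=0}^{2^k-1}$ satisfies $G\preceq I$, writing $I-G=B^{\top}B$ and realizing the rows of $B$ against an orthonormal zero-mean family in ${\mathcal E}^{k+1}_{[1,2]}$ (whose zero-integral subspace has dimension $2^{k+1}-1\geq 2^k$) does give all four required properties, and your verifications of orthonormality, of \eqref{eq:4} and of $f_k^i|_{[1,2]}\in{\mathcal E}^{k+1}_{[1,2]}$ are correct. Bear in mind that the present paper does not prove Lemma \ref{lem:1} at all --- it is quoted from \cite{K:04} --- so no internal comparison is possible; the whole mathematical content of the lemma is the quasi-orthogonality estimate $\bigl\|\sum_i u_iM_{k,i}\bigr\|_{L^2[-1,1]}^2\leq\|u\|_{\ell^2}^2$, which is precisely what the normalization $\frac{1}{8i}2^{k/2}$ is designed to produce, and which you correctly identify as the crux.

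It is exactly there that your argument has a genuine gap. The Fourier/aliasing reduction $\lambda_{\max}(G)\leq 2^{k+2}\max_m S(m)$ is correct, and the decay $|c_n|^2\leq C\min(2^{-k},2^k n^{-2})$ is true, but the asserted conclusion $\max_m S(m)\leq 2^{-k-2}$ does not follow from it as stated, because everything hinges on the numerical value of $C$, which you never compute --- and the needed constant is not generous. Carrying out the natural estimates (comparison of $M_k$ with $\frac{2^{-k/2}}{8x}$ gives $|c_n|\leq\frac{2^{-k/2}}{8}(\mathrm{Si}(\pi)+2+o(1))$, summation by parts gives $|c_n|\leq\frac{2^{k/2}}{2\pi n}$), the principal alias contributes about $0.23\cdot 2^{-k}$ and the terms $s\neq0$ about $0.06\cdot 2^{-k}$, i.e.\ roughly $0.3\cdot2^{-k}>2^{-k-2}$, so the chain of inequalities you describe only yields $\lambda_{\max}(G)\leq 1.2$ or so, which does not make $I-G$ positive semidefinite. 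The inequality you need is in fact true with room to spare, but to prove it you must exploit where the two bounds are attained: for residues $|m|\leq 2^{k-1}$ the aliases sit at $|n|\geq 3\cdot 2^{k-1}$ and are negligible, while for $2^{k-1}<|m|\leq 2^{k}$ the principal coefficient is itself small, since $|c_n|\approx\frac{2^{-k/2}}{8}\bigl|\mathrm{Si}(\pi n)-\mathrm{Si}(\pi n2^{-k})\bigr|$ is governed there by the tail of the sine integral. Some such case analysis, or otherwise fully quantified coefficient bounds, is indispensable; as written, the decisive inequality $G\preceq I$ is asserted rather than proved.
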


The Haar functions are defined in the following way: for all
$t\in[0,1]$ we will take $ h_1(t)=1 $ and for $k=0,1,2,\dots$;
$j=1,2,\dots,2^k$, let
\[
 h^{(k)}_j(t)=\left\{\begin{array}{ll}\,
\, \,  2^{\frac{k}{2}},\quad &{\mbox {if }}\quad
\frac{2j-2}{2^{k+1}}<t< \frac{2j-1}{2^{k+1}} ;\\
-2^{\frac{k}{2}},\quad\, &\mbox{if }\quad \frac{2j-1}{2^{k+1}}<t<
\frac{2j}{2^{k+1}} ;\\  \quad 0,  &{\mbox {otherwise.}}
\end{array}\right.
\]
If $n=2^k+j$ we denote
$h_{n}=h^{(k)}_j.$
The closure of the support of a the Haar function $h_{n}$ will be
denoted by $\Delta_n $ or by $\Delta^{(k)}_j.$ It can be easily checked that for any $k\in {\IN}$ the Haar
functions $\{ h_n\}_{n=1}^{2^k}$ constitute an orthonormal basis
in the space $ {\mathcal E}^k_{[0,1]}.$

 Recall orthogonal Haar matrices $H_{k}, k\in \mathbb{N}$, that arise from the Haar system. For any  $k\in \mathbb{N }$ we take the midpoints $x_{j}^{(k)} = (j - \frac{1}{2})2^{-k}$ of the intervals $\Delta^{(k)}_j =(\frac{j-1}{2^{k}}, \frac{j}{2^{k}})  $ and set
\begin{equation*}\label{eq:hm}
H_{k} = \left( a_{ij}^{(k)}\right) \quad 1\leq i,j \leq 2^{k}, \, k\in \mathbb{N},
\end{equation*}
where
\begin{equation}\label{eq:hm1}
a_{ij}^{(k)} = 2^{-\frac{k}{2}} h_{i}(x_{j}^{(k)}).
\end{equation}

The Rademacher system $\{r_n(t)\}_{n=0}^\infty $ is an orthonormal
system of functions defined on the closed interval $[0,1].$  It is
convenient for us to consider the Rademacher functions defined on
the real line:
\[
 r_n(t) = \mbox{sgn}\,(\sin 2^{n+1} \pi t) \qquad \qquad
t\in {\IR}, n= 0,1,\dots .
\]
 For our construction it is useful
to note that
\begin{equation}\label{rad:1}
r_n(\cdot)\in {\mathcal E}^{n,n+1}_{[0,1]},\, h_1^{n}(\cdot)\in
{\mathcal E}^{n,n+1}_{[0,1]} \quad \mbox{for all}\quad n\in {\IN}.
\end{equation}

 We also
need the following two lemmas from \cite{K:06}(see Lemma 2.9 and Lemma 2.7).
\begin{lem}\label{le:11} Let $\{ f_k\}_{k=1}^{N+1}$ be a
collection of  functions on $[0,1]$ such that for any $\{
a_k\}_{k=1}^{N+1} \subset \IR$
 $$
 \int_{[0,1]} \sup_{m} \bigg|\sum_{k=1}^m a_kf_k({x})\bigg|^2 dx  \leq C
 \sum_{k=1}^{N+1}a_k^2
 $$
 for some $C>0.$  Then for any $\{
b_k\}_{k=1}^{N+1} \subset \IR$
 $$
 \int_{[0,1]} \max_{1\leq m\leq N} \bigg|\sum_{k=1}^m b_k\widetilde{f}_k({x})\bigg|^2 dx  \leq 14C
 \sum_{k=1}^{N}b_k^2,
 $$
 where
 $$
\widetilde{f}_{j}(x) = -\delta_N\sum_{i=1}^{N}f_{ i+1}(x) +
f_{j+1}(x) +  \frac1{\sqrt{N+1}}f_1(x)\qquad (1\leq j\leq N),
 $$
 and $\delta_N={ \frac1{N}}(1+ {
\frac1{\sqrt{N+1}}}). $
\end{lem}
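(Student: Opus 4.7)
\medskip

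\noindent\textbf{Proof plan for Lemma \ref{le:11}.}
The plan is to rewrite $\sum_{k=1}^{m}b_{k}\widetilde{f}_{k}$ as the sum of a ``good'' piece whose coefficients do not depend on $m$ (so the hypothesis applies directly) plus a ``bad'' piece consisting of a single function multiplied by the scalar $B_m:=\sum_{k=1}^{m}b_{k}$, and then combine the two estimates by the triangle inequality in $L^{2}$.

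First, I would substitute the definition of $\widetilde{f}_{j}$ into the partial sum and rearrange. Writing $B_{m}=\sum_{k=1}^{m}b_{k}$, a direct expansion gives
\begin{equation*}
\sum_{k=1}^{m}b_{k}\widetilde{f}_{k}(x)\;=\;\sum_{k=1}^{m}b_{k}f_{k+1}(x)\;+\;B_{m}\,g(x),
\end{equation*}
where
\begin{equation*}
g(x)\;:=\;\frac{1}{\sqrt{N+1}}\,f_{1}(x)\;-\;\delta_{N}\sum_{i=1}^{N}f_{i+1}(x).
\end{equation*}
This is the key algebraic step, and it exploits the fact that only the ``$-\delta_{N}\sum f_{i+1}$'' and the ``$\frac{1}{\sqrt{N+1}}f_{1}$'' pieces of $\widetilde{f}_{k}$ are independent of $k$, so they factor through $B_{m}$.

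Next I would estimate the two pieces separately. For the first piece, apply the hypothesis to the coefficient sequence $a_{1}=0$, $a_{k+1}=b_{k}$ for $1\le k\le N$. Since $\sum_{k=1}^{m}b_{k}f_{k+1}=\sum_{k=1}^{m+1}a_{k}f_{k}$, this yields
\begin{equation*}
\int_{0}^{1}\max_{1\le m\le N}\Bigl|\sum_{k=1}^{m}b_{k}f_{k+1}(x)\Bigr|^{2}dx\;\le\;C\sum_{k=1}^{N}b_{k}^{2}.
\end{equation*}
For the second piece, note that $\max_{m}|B_{m}|^{2}\le N\sum_{k=1}^{N}b_{k}^{2}$ by Cauchy--Schwarz, so
\begin{equation*}
\int_{0}^{1}\max_{1\le m\le N}|B_{m}g(x)|^{2}dx\;\le\;N\Bigl(\sum_{k=1}^{N}b_{k}^{2}\Bigr)\int_{0}^{1}g(x)^{2}\,dx.
\end{equation*}
To control $\|g\|_{2}^{2}$, apply the hypothesis once more — taking $m=N+1$ in the supremum and choosing the coefficients $a_{1}=\tfrac{1}{\sqrt{N+1}}$, $a_{k}=-\delta_{N}$ for $2\le k\le N+1$ — to obtain $\|g\|_{2}^{2}\le C\bigl(\tfrac{1}{N+1}+N\delta_{N}^{2}\bigr)$. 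Using $N\delta_{N}=1+\tfrac{1}{\sqrt{N+1}}$, a brief computation shows $N\bigl(\tfrac{1}{N+1}+N\delta_{N}^{2}\bigr)=2+\tfrac{2}{\sqrt{N+1}}\le 2+\sqrt{2}$, so the second piece is at most $(2+\sqrt 2)C\sum b_k^2$.

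Finally, combining the two bounds via the triangle inequality in $L^{2}(\max_m|\cdot|^2\,dx)$ gives a constant $\bigl(1+\sqrt{2+\sqrt{2}}\bigr)^{2}C<14C$, which yields the claim. The main (though still mild) obstacle is the choice of the specific coefficient sequence that makes $g$ arise naturally from the hypothesis; the particular value $\delta_{N}=\tfrac{1}{N}(1+\tfrac{1}{\sqrt{N+1}})$ is evidently tuned so that $N^{2}\delta_{N}^{2}$ telescopes cleanly with $\tfrac{N}{N+1}$ to produce the uniform constant $2+\tfrac{2}{\sqrt{N+1}}$, and recognizing this structure is what makes the proof go through.
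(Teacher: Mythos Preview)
Your argument is correct. The decomposition
\[
\sum_{k=1}^{m}b_{k}\widetilde{f}_{k}(x)=\sum_{k=1}^{m}b_{k}f_{k+1}(x)+B_{m}\,g(x),\qquad g=\tfrac{1}{\sqrt{N+1}}f_{1}-\delta_{N}\sum_{i=1}^{N}f_{i+1},
\]
is verified by direct substitution, and both pieces are estimated exactly as you describe. The computation $N\bigl(\tfrac{1}{N+1}+N\delta_{N}^{2}\bigr)=2+\tfrac{2}{\sqrt{N+1}}\le 2+\sqrt{2}$ is the point where the specific value of $\delta_{N}$ is used, and you identified it correctly; the final constant $(1+\sqrt{2+\sqrt{2}})^{2}\approx 8.11$ comfortably sits below $14$.

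As for comparison with the paper: there is nothing to compare. The present paper does not prove Lemma~\ref{le:11} at all but merely quotes it from \cite{K:06} (Lemma~2.9 there). Your write-up therefore supplies a self-contained proof where the paper gives only a citation, and the approach you take---split off the $m$-independent part, apply the hypothesis twice, and control the scalar $B_{m}$ by Cauchy--Schwarz---is the natural one and presumably coincides with the argument in the cited reference.
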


We give the proof of the following lemma because the value of the constant $C_{p}$ is adjusted. Of course the exact value of the constant is
not important for the proof of the main result but the lemma may be interesting by itself.
\begin{lem}\label{l:61} Let $f\in L^p_{[0,1]}, 1\leq p<\infty$ and
$$\Lambda_{f}(t):= |\{x\in [0,1]: |f(x)|> t\}|.$$ Define
\begin{equation*}\label{eq:61}
f^{\natural}_{p}(x) = 2^{k/p}f\bigg(2^{k}(x-2^{-k})\bigg)
\quad\mbox{when}\quad x\in (2^{-k},2^{-k+1}],\quad k\in\IN.
\end{equation*}
Then
\[\Lambda_{f^{\natural}_{p}}(t) \leq
C_{p}t^{-p}\|f\|_{p}^{p},\quad \mbox{where}\quad C_{p} =
\frac{2}{p(2^{1/p}-1)}.
\]
\end{lem}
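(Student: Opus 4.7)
The plan is to express the distribution function of $f^\natural_p$ as a weighted sum of values of $\Lambda_f$ via a change of variables, and then compare that sum with the layer-cake representation of $\|f\|_p^p$.

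First I would change variables on each dyadic interval. On $(2^{-k},2^{-k+1}]$ the map $y=2^k(x-2^{-k})$ is a bijection onto $(0,1]$ with Jacobian $2^{-k}$, and the inequality $|f^\natural_p(x)|>t$ is equivalent to $|f(y)|>t\cdot 2^{-k/p}$. Summing the contributions from $k=1,2,\dots$ gives
\[
\Lambda_{f^\natural_p}(t)=\sum_{k=1}^\infty 2^{-k}\,\Lambda_f(t\cdot 2^{-k/p}).
\]
Setting $u_k:=t\cdot 2^{-k/p}$, so that $u_k^p=t^p\,2^{-k}$ and $u_{k+1}=2^{-1/p}u_k$, the identity rewrites as $\Lambda_{f^\natural_p}(t)=t^{-p}\sum_{k=1}^\infty u_k^p\,\Lambda_f(u_k)$, which reduces the lemma to the inequality $\sum_k u_k^p\,\Lambda_f(u_k)\leq C_p\|f\|_p^p$.

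Next I would compare each term $u_k^p\,\Lambda_f(u_k)$ with the piece $\int_{u_{k+1}}^{u_k} ps^{p-1}\Lambda_f(s)\,ds$ of the layer-cake integral $\|f\|_p^p=\int_0^\infty ps^{p-1}\Lambda_f(s)\,ds$. Since $\Lambda_f$ is non-increasing, $\Lambda_f(s)\geq\Lambda_f(u_k)$ on $[u_{k+1},u_k]$; combining this with the lower bound $s^{p-1}\geq u_{k+1}^{p-1}$ (valid for $p\geq 1$), the identity $u_k-u_{k+1}=u_k(1-2^{-1/p})$, and $u_{k+1}^{p-1}=u_k^{p-1}2^{-(p-1)/p}$, a direct computation produces
\[
\int_{u_{k+1}}^{u_k} ps^{p-1}\Lambda_f(s)\,ds\;\geq\;\tfrac{p}{2}(2^{1/p}-1)\,u_k^p\,\Lambda_f(u_k).
\]
Since the intervals $[u_{k+1},u_k]$ for $k\geq 1$ are disjoint and contained in $(0,u_1]$, summing over $k$ gives $\sum_k u_k^p\,\Lambda_f(u_k)\leq \frac{2}{p(2^{1/p}-1)}\,\|f\|_p^p$, and multiplying by $t^{-p}$ yields the stated bound.

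The only delicate point is the third step: the choice of the crude lower bound $s^{p-1}\geq u_{k+1}^{p-1}$ (rather than, say, exploiting the exact value of $\int_{u_{k+1}}^{u_k} ps^{p-1}ds=u_k^p/2$) is what generates the factor $p(2^{1/p}-1)$ in the denominator of the constant $C_p$, producing exactly the advertised value. Everything else is a routine change of variables followed by a monotone comparison of a series to an integral.
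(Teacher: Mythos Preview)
Your argument is correct and is essentially the same as the paper's: both compute $\Lambda_{f^\natural_p}(t)=\sum_k 2^{-k}\Lambda_f(2^{-k/p}t)$ by change of variables, then bound each term against the corresponding piece $\int_{u_{k+1}}^{u_k} x^{p-1}\Lambda_f(x)\,dx$ of the layer-cake integral using the monotonicity of $\Lambda_f$ together with the crude lower bound $x^{p-1}\ge u_{k+1}^{p-1}$. Your notation $u_k$ and the explicit verification of the constant make the computation slightly cleaner, but the route is identical.
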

\begin{proof}
The proof is straightforward. We have that
\begin{eqnarray*}
\Lambda_{f^{\natural}_{p}}(t) =&&\sum_{k=1}^{\infty}|\{x\in
(2^{-k},2^{-k+1}]: |f^{\natural}(x)_{p}|> t\}|\\ =&&
\sum_{k=1}^{\infty}2^{-k}|\{x\in [0,1]: |f(x)|> 2^{-k/p}t\}| =
\sum_{k=1}^{\infty}2^{-k}\Lambda_{f}(2^{-k/p}t).
\end{eqnarray*}
Afterwards, we write
\[
{2^{-k}\Lambda_{f}(2^{-k/p}t)} =
t^{-p}(2^{-k/p}t)^{p-1}\Lambda_{f}(2^{-k/p}t)(2^{-k/p}-
2^{-\frac{k+1}p})t\cdot \frac1{1-2^{-1/p}}.
\]
Observe that
\[
(2^{-(k+1)/p}t)^{p-1}\Lambda_{f}(2^{-k/p}t) \leq x^{p-1} \Lambda_{f}(x) \qquad \mbox{if}\quad x\in (2^{-\frac{k+1}p}t, 2^{-\frac{k}p}t)
\]
Hence,
\[
(2^{-(k+1)/p}t)^{p-1}\Lambda_{f}(2^{-k/p}t)(2^{-k/p}-
2^{-\frac{k+1}p})t \leq \int_{2^{-\frac{k+1}p}t}^{2^{-\frac{k}p}t} x^{p-1} \Lambda_{f}(x)dx
\]
and  the proof is easily finished
  recalling the formula \newline
   $\|f\|_{p}^{p} = p\int_{0}^{\infty}
t^{p-1}\Lambda_{f}(x)dx.$
\end{proof}

Recall that a system of functions $\{ f_k\}_{k=1}^{\infty}$ defined
 on $[0,1]$ is called an $S_p-$system $(2<p<\infty)$ if for any $\{
a_k\}_{k=1}^{N} \subset \IR$
 $$
 \bigg\|\sum_{k=1}^m a_kf_k(\cdot)\bigg\|_p   \leq C_p
 \left(\sum_{k=1}^{N}a_k^2\right)^{1/2}
 $$
 for some $C_p>0.$ The following result is well known (see
 \cite{G:72}).
 \begin{prop}\label{pro:1} Let $\{ f_k\}_{k=1}^{\infty}$ be an $S_p-$system
 $(2<p<\infty).$
 Then for any $\{
a_k\}_{k=1}^{\infty} \in l^2$ $$
 \bigg\|\sup_{m}\sum_{k=1}^m a_kf_k(\cdot)\bigg\|_p   \leq C'_p
 \left(\sum_{k=1}^{\infty}a_k^2\right)^{1/2}
 $$
 for some $C'_p>0.$
 \end{prop}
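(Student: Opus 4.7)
The plan is to invoke the classical Menshov--Rademacher dyadic decomposition, refined by the $S_p$--hypothesis on block sums.

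\textbf{Step 1 (dyadic decomposition).} For each $n$ with $1 \leq n < 2^N$, write the binary expansion $n = 2^{j_1} + 2^{j_2} + \cdots + 2^{j_s}$ with $j_1 > j_2 > \cdots > j_s \geq 0$. The interval $[1,n] \cap \mathbb{N}$ partitions into consecutive dyadic intervals $I_1, \ldots, I_s$ of ranks $j_1, \ldots, j_s$ (so $|I_i| = 2^{j_i}$ and the left endpoint of $I_i$ is a multiple of $2^{j_i}$). Setting $B_I := \sum_{k\in I} a_k f_k$, one has $S_n f = \sum_{i=1}^s B_{I_i}$, whence the pointwise domination
\[
\sup_{1 \leq n < 2^N} |S_n f(x)| \;\leq\; \sum_{j=0}^{N-1} M_j(x), \qquad M_j(x) := \max_{I\text{ dyadic of rank }j} |B_I(x)|.
\]

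\textbf{Step 2 (uniform $L^p$ bound on $M_j$).} By the $S_p$-hypothesis, $\|B_I\|_p \leq C_p\, b_I^{1/2}$ with $b_I := \sum_{k \in I} a_k^2$.  Combined with $\max_I |B_I| \leq \bigl(\sum_I |B_I|^p\bigr)^{1/p}$ and the fact that $p/2 > 1$,
\[
\|M_j\|_p^p \;\leq\; \sum_I \|B_I\|_p^p \;\leq\; C_p^p \sum_I b_I^{p/2} \;\leq\; C_p^p \|a\|_{\ell^2}^p,
\]
since $\sum_I b_I \leq \|a\|_{\ell^2}^2$ with $0 \leq b_I/\|a\|_{\ell^2}^2 \leq 1$ and $p/2 > 1$ force $\sum_I b_I^{p/2} \leq \|a\|_{\ell^2}^p$. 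Hence $\|M_j\|_p \leq C_p\|a\|_{\ell^2}$ uniformly in $j$.

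\textbf{Step 3 (summing over scales).} A direct triangle inequality $\|\sum_j M_j\|_p \leq \sum_j \|M_j\|_p$ produces only the logarithmic bound $O\bigl((\log N)\|a\|_{\ell^2}\bigr)$. \emph{Eliminating this $\log N$ factor is the main obstacle}: it requires exploiting the strict inequality $p > 2$ rather than merely $p \geq 2$, via a more delicate aggregation of the kind carried out by Gaposhkin in \cite{G:72}. One natural route is a telescoping device between adjacent dyadic scales, iteratively absorbing the contribution at rank $j+1$ into that at rank $j$ by re-applying the $S_p$-bound to the resulting finer block; the error terms decay geometrically in $j$ with a ratio strictly less than one whenever $p > 2$, yielding a convergent series and hence a total constant $C_p'$ depending only on $p$.

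Passing to the limit $N \to \infty$ by monotone convergence then gives the stated maximal inequality $\|\sup_m |\sum_{k=1}^m a_k f_k|\|_p \leq C_p'\|a\|_{\ell^2}$ for all $\{a_k\} \in \ell^2$.
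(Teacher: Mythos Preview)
The paper does not give a proof of this proposition at all: it states the result as ``well known'' and cites Gaposhkin's survey \cite{G:72}. So there is no ``paper's own proof'' to compare against---the paper treats it as a black box.

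Your Steps~1 and~2 are correct and set up the standard Menshov--Rademacher dyadic machinery; the bound $\|M_j\|_p\le C_p\|a\|_{\ell^2}$ (uniform in $j$) is valid and the convexity argument $\sum_I b_I^{p/2}\le\|a\|_{\ell^2}^p$ is fine. The problem is Step~3. You correctly identify that the triangle inequality over scales only yields $O(\log N)\|a\|_{\ell^2}$ and that removing this factor is the heart of the matter, but what follows is not a proof: the ``telescoping device between adjacent dyadic scales'' with ``error terms decaying geometrically'' is never written down, and as stated it is not clear what quantity is supposed to decay or why $p>2$ forces a ratio below~$1$. In effect you defer to \cite{G:72} exactly as the paper does, except wrapped in a vague sketch.

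If you want to actually close this gap, the standard route (see Gaposhkin, or Alexits) is not a telescoping at the level of the $M_j$ but rather a two-scale recursion combined with the Minkowski inequality in $L^{p/2}$ (this is where $p>2$ enters essentially): one controls the oscillation within dyadic blocks by $\bigl\|\bigl(\sum_J O_J^2\bigr)^{1/2}\bigr\|_p^2\le\sum_J\|O_J\|_p^2$ via Minkowski in $L^{p/2}$, and separately controls the maximal function along the sparse subsequence $\{2^J\}$ using that normalized block sums again form an $S_p$-system with the same constant. The interplay of these two pieces is what produces a bounded constant; your scale-by-scale $\ell^1$ summation cannot. As written, Step~3 is a genuine gap.
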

The Khintchine inequalities (see \cite{K:93}) show that the Rademacher system is an
$S_p-$system $(2<p<\infty).$ The definition of a set(system) of independent functions can be consulted in \cite{K:93}, \cite{G:72} and others.

\

\section{  Construction  of a  CONS of bounded functions}\label{sim:1}

For the completeness of the exposition we repeat the construction of two auxiliary complete orthonormal systems from \cite{K:06}.
For the convenience of the reader we will  maintain some notations
of the cited paper.

\subsection{ Construction of the first auxiliary CONS} We
suppose that the  orthonormal set of functions
 $\{ f_k^i\}_{i=0}^{2^k-1}, k\in {\IN}$ defined in Lemma \ref{lem:1}
are extended periodically with period $4$ on the whole line and
 define
 $$
g_1^i(x)=\left\{\begin{array}{ll}
 2  f_2^i(8x-2 )
,\quad &{\mbox {if }}\quad x\in [0,\frac12];\\
 r_{9+i}(x)I_{(\frac12,1]}, \qquad &{\mbox {if }}\quad x\in
 (\frac12,1],
\end{array}\right.
$$
for $0\leq i\leq 2^2-1.$  It is easy to check that the functions
$\{{g}_{1}^i(x)\} _{i=0}^{3}$ are orthonormal in  the space
$L^2_{[0,1]}.$ Let $k_1$ be the smallest natural number such that
\begin{equation*}
g_1^i \in {\mathcal E}^{k_1}_{[0,1]} \quad 0\leq i\leq 2^2-1.
\end{equation*}
We take a set of orthonormal functions
\begin{equation*}\label{psi:1}
\psi_\nu \in {\mathcal E}^{k_1+2}_{[0,1]}, 1\leq \nu \leq
2^{k_1+2} -6 = m_1,
\end{equation*}
 $\psi_1(t) =1\quad \mbox{if} \quad t\in [0,1], $ that are
orthogonal in $L^2_{[0,1]}$ to the functions
\begin{equation}\label{ad:1}
g_1^i ( 0\leq i\leq 2^2-1), r_{k_1}\quad \mbox{and}\quad
h_{1}^{(k_1+1)}.
\end{equation}
By (\ref{rad:1}) it is obvious that the functions (\ref{ad:1})
constitute an orthonormal set of functions. According to our
construction the set of the functions $$ \{{g}_{1}^i(x)\}
_{i=0}^{3}\bigcup \{r_{k_1}\}\bigcup \{h_{1}^{({k_1+1})}\}
\bigcup \{\psi_\nu(x)\} _{\nu=1}^{m_{1} }$$ is an orthonormal
basis in ${\mathcal E}^{k_1+2}_{[0,1]}.$

 At the $n-$th
step, $n>1,$ of our construction we
 define
\begin{equation*}\label{eq:6}
\widehat{g}_{n}^i(x)=\left\{\begin{array}{lll} n^{-1}2^{n^{2}}
f_{2n}^i\bigg(2^{2n^2+2}(x- 2^{-2n^2})\bigg)\quad &{\mbox {if
}}\quad x\in [0,2^{-2n^2}];\\ n^{-1} 2^{\frac{k}2}
f_{2n}^i\bigg(2^{k+2}(x- 2^{-k})\bigg) \quad &{\mbox {if }}\quad
x\in (2^{-k},2^{-k+1}],
\\ \quad &{\mbox {and }}\quad
 2\leq k\leq 2n^2;\\
 r_{k_{n-1}+4+i}(x)I_{(\frac12,1]}, \qquad &{\mbox {if }}\quad x\in
 (\frac12,1],
\end{array}\right.
\end{equation*}
 and $0\leq i\leq 2^{2n}-1.$ Then as above we extend
the functions $\widehat{g}_{n}^i, 0\leq i\leq 2^{2n}-1$
periodically with period $1$ to the whole line and denote
\begin{equation}\label{eq:7}
 {g}_{n}^i(x)=
\widehat{g}_{n}^i(2^{k_{n-1}+2n^2+2}x) \qquad {\mbox {for
all}}\quad 0\leq i\leq 2^{2n}-1.
\end{equation}
  It is easy to check that the functions
$\{{g}_{n}^i(x): 0\leq i\leq 2^{2n}-1\}$ are orthonormal in the
space $L^2_{[0,1]}$ and if
 $k_{n}$ is the smallest natural number
such that
$$
g_{n}^i \in {\mathcal E}^{k_{n}}_{[0,1]}  \qquad
{\mbox {for all}}\quad 0\leq i\leq 2^{2n}-1,
$$
then by the definition of the set of functions $\{\widehat{g}_{n}^i(x): 0\leq i\leq 2^{2n}-1\}$
 (\ref{eq:7}) and Lemma \ref{lem:1}
$$
g_{n}^i\in {\mathcal E}^{k_{n-1}+1,k_{n}}_{[0,1]} \quad 0\leq i\leq
2^{2n}-1.
$$
 We take a set of orthonormal functions
  \[
 \psi_\nu \in
{\mathcal E}^{k_{n-1}+2,k_{n}+2}_{[0,1]}, m_{n-1}+1\leq \nu \leq
2^{k_{n}+2 } - 2^{2n}-2 = m_{n},
\]
 that are orthogonal in
$L^2_{[0,1]}$ to the functions
\begin{equation*}\label{add:1}
r_{k_{n}}, h_{1}^{({k_{n}+1})}\quad \mbox{and}\quad  g_n^i \quad
\mbox{for all}\quad  (0\leq i\leq 2^{2n}-1).
\end{equation*}
As above we conclude  that the set of functions
$$
 \{{g}_{n}
^i(x)\}_{i=0}^{2^{2n}-1}\bigcup \{r_{{k_{n}}}(x)\} \bigcup
\{h_{1}^{({k_{n}+1})}(x)\} \bigcup \{\psi_\nu(x)\}
_{\nu=m_{n-1}+1}^{m_{n}}
 $$
  is an orthonormal basis in ${\mathcal
E}^{k_{n-1}+2,k_{n}+2}_{[0,1]}.$ Hence,
\begin{equation*}\label{ausys:1}
\bigcup_{n=1}^{\infty}\{{g}_{n}^i(x)\} _{i=0}^{2^{2n}-1} \bigcup
\{r_{{k_{n}}}(x)\}_{n=1}^{\infty}\bigcup
\{h_{1}^{({k_{n}+1})}(x)\}_{n=1}^{\infty}
 \bigcup \{\psi_\nu(x)\} _{\nu=1}^{\infty}
 \end{equation*}
  is a CONS in $L^2_{[0,1]}.$ From our construction if follows
  immediately that
\begin{equation*}\label{eq:8}
\int_{\Delta^{(k)}_{\nu}} \bigg( \sum_{i=
0}^{2^{2n}-1}{a}_{i}{g}_{n}^{i}(x) \bigg)^2 dx =
|\Delta^{(k)}_{\nu}| \sum_{i= 0}^{2^{2n}-1}{a}_{i}^2
\end{equation*}
for any  $\Delta^{(k)}_{\nu} = (\frac{\nu-1}{2^{k}},
\frac{\nu}{2^{k}}),$ where $ 1\leq \nu \leq 2^k, 1\leq k \leq
k_{n-1}+2n^2+2.$
 Moreover, by (\ref{eq:4})
we obtain that for any $\omega\in {\IR}$
\begin{equation}\label{eq:9}
\int_{\Delta^{(k)}_{\nu}} \bigg(\omega + \sum_{i=
0}^{2^{2n}-1}{a}_{i}{g}_{n}^{i}(x) \bigg)^2 dx =
|\Delta^{(k)}_{\nu}|\bigg( \omega^{2} +\sum_{i=
0}^{{2^{2n}-1}}{a}_{i}^2 \bigg).
\end{equation}
 We also have
that for any $n\in {\IN}$
\begin{equation}\label{ad:2}
 \| \psi_\nu\|_{\infty} \leq \sqrt{2^{k_n+2}} \quad
\mbox{for all}\quad m_{n-1}+1 \leq \nu \leq m_n,m_0=0
\end{equation}
just because they  belong to the space ${\mathcal
E}^{k_{n}+2}_{[0,1]}.$ Note also that
\begin{equation}\label{ad:4}
m_n < 2^{k_n+2}  \quad \mbox{for any}\quad n\in {\IN}.
\end{equation}

According to our construction and Lemma 3 of \cite{K:04} the
following assertion holds.
\begin{prop}\label{clm:1}
 For all $n\in \IN $ and for any collection
of nontrivial functions $$ F_j(x) = \sum_{i=1}^{2^{j+2}-1}
a_i^{(j)} {g}_{j}^i(x)\quad 1\leq j\leq n$$ the functions  $
\{F_j(x), r_{k_j}(x)\}_{j=1}^{n}$ constitute a set of independent functions.
\end{prop}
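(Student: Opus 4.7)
The plan is to verify directly the factorization of the joint distribution, exploiting the fact that the multi-scale construction places each $F_j$ at a much finer dyadic scale than its predecessors, with enough built-in periodicity to force equidistribution on the coarser atoms. Recall from (\ref{eq:7}) that $g_j^i(x) = \widehat{g}_j^i(2^{k_{j-1}+2j^2+2}x)$, so $g_j^i$ is periodic with period $2^{-(k_{j-1}+2j^2+2)}$. Since this period divides $2^{-(k_{j-1}+1)}$ an integer number ($2^{2j^2+1}$) of times, any linear combination $F_j$ undergoes many full cycles on each dyadic interval $\Delta^{(k_{j-1}+1)}_\nu$; consequently the push-forward (rescaled) distribution of $F_j$ on every such interval coincides with its distribution on the whole of $[0,1]$.

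Next I would exploit the scale inclusions. By construction $g_{j'}^i \in \mathcal{E}^{k_{j'-1}+1,k_{j'}}_{[0,1]}$, and by (\ref{rad:1}) $r_{k_{j'}}\in \mathcal{E}^{k_{j'},k_{j'}+1}_{[0,1]}$. For $j'<j$ both $k_{j'}$ and $k_{j'}+1$ are at most $k_{j-1}+1$ (since the sequence $k_n$ is strictly increasing), hence each of $F_{j'}$ and $r_{k_{j'}}$ with $j'<j$ is constant on every $\Delta^{(k_{j-1}+1)}_\nu$. It follows that the joint $\sigma$-algebra generated by $F_1,r_{k_1},\ldots,F_{j-1},r_{k_{j-1}}$ is coarser than the level-$(k_{j-1}+1)$ dyadic algebra, and its atoms are unions of intervals of the form $\Delta^{(k_{j-1}+1)}_\nu$. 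Combined with the equidistribution of $F_j$ established in the previous paragraph, this yields by induction on $j$ that the system $F_1,r_{k_1},\ldots,F_{j-1},r_{k_{j-1}},F_j$ is independent. The independence of $r_{k_j}$ from $F_1,r_{k_1},\ldots,F_j$ follows in the same spirit: all those functions belong to $\mathcal{E}^{k_j}_{[0,1]}$, while $r_{k_j}$ takes the values $\pm 1$ with equal measure on every level-$k_j$ dyadic interval, so conditioning on any atom of the preceding $\sigma$-algebra leaves the distribution of $r_{k_j}$ unchanged.

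The main technical obstacle is the equidistribution claim on every $\Delta^{(k_{j-1}+1)}_\nu$: one must check that the hybrid pieces of $\widehat{g}_j^i$ (the scaled Menshov components on the dyadic shells $(2^{-k},2^{-k+1}]$ for $2\leq k\leq 2j^2$ and the Rademacher part on $(1/2,1]$) combine, under the periodic extension, into an honest period-$1$ function whose shape is reproduced identically on every period and which has zero mean on each subinterval of level $k_{j-1}+1$. The key ingredients are the zero-mean property (\ref{eq:4}) of the Menshov atoms $f_{2j}^i$ and the standard zero-mean property of the Rademacher summands; the remaining combinatorial verification is exactly what is carried out in Lemma 3 of \cite{K:04} invoked in the statement.
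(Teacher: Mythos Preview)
Your proposal is correct and follows the same approach as the paper. The paper gives no explicit proof here, only the one-line remark ``According to our construction and Lemma 3 of \cite{K:04} the following assertion holds''; your write-up is precisely a fleshed-out version of that reference, using the periodicity built into (\ref{eq:7}) and the scale separation $g_{j'}^i\in\mathcal{E}^{k_{j'}}_{[0,1]}$, $r_{k_{j'}}\in\mathcal{E}^{k_{j'}+1}_{[0,1]}$ to run the inductive independence argument, and deferring the remaining combinatorial check to the same Lemma 3 of \cite{K:04}.
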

The following propositions were proved in \cite{K:06}
\begin{prop}\label{p:2}
For any sequence $\{c_{\nu}\} _{\nu=1}^{\infty}\in l^2$
\begin{equation*}\label{add:2}
\int_{[0,1]}\sup_{n}
|\sum_{\nu=1}^{m_n}c_{\nu}\psi_{\nu}(x)|^{2}dx \leq C
\sum_{\nu=1}^{\infty}c_{\nu}^2,
\end{equation*}
 for some $C>0$
independent of the coefficients.
\end{prop}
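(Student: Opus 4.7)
\medskip

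\noindent\textbf{Proof plan for Proposition \ref{p:2}.}

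The plan is to recognize the partial sums $S_n(x)=\sum_{\nu=1}^{m_n}c_\nu\psi_\nu(x)$ as a dyadic martingale and invoke Doob's $L^2$ maximal inequality. Concretely, I will show that $S_n$ is the orthogonal projection onto $\mathcal{E}^{k_n+2}_{[0,1]}$ of the function $f:=\sum_{\nu=1}^{\infty}c_\nu\psi_\nu\in L^2_{[0,1]}$, i.e. the conditional expectation of $f$ with respect to the dyadic $\sigma$-algebra $\mathcal{F}_{k_n+2}$ at scale $2^{-(k_n+2)}$.

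The identification proceeds as follows. By the construction recalled in Section \ref{sim:1}, for each $n\in\IN$ the set
\[
\bigcup_{j=1}^{n}\Bigl(\{g_j^i\}_{i=0}^{2^{2j}-1}\cup\{r_{k_j}\}\cup\{h_1^{(k_j+1)}\}\cup\{\psi_\nu\}_{\nu=m_{j-1}+1}^{m_j}\Bigr)
\]
is an orthonormal basis of $\mathcal{E}^{k_n+2}_{[0,1]}$, arising from the orthogonal direct sum decomposition $\mathcal{E}^{k_n+2}_{[0,1]}=\mathcal{E}^{k_1+2}_{[0,1]}\oplus\bigoplus_{j=2}^{n}\mathcal{E}^{k_{j-1}+2,k_j+2}_{[0,1]}$. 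Since the $\psi_\nu$'s were chosen orthogonal to every $g_j^i$, $r_{k_j}$, and $h_1^{(k_j+1)}$, the projection of $f$ onto $\mathcal{E}^{k_n+2}_{[0,1]}$ picks up only the $\psi$-coefficients with index $\nu\le m_n$, yielding exactly $S_n$. For the same reason, the tail $\sum_{\nu>m_n}c_\nu\psi_\nu$ lies in the orthogonal complement of $\mathcal{E}^{k_n+2}_{[0,1]}$, because each such $\psi_\nu$ belongs to $\mathcal{E}^{k_{n'-1}+2,k_{n'}+2}_{[0,1]}$ for some $n'>n$ and is therefore orthogonal to $\mathcal{E}^{k_{n'-1}+2}_{[0,1]}\supset\mathcal{E}^{k_n+2}_{[0,1]}$. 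Thus $S_n=E[f\mid \mathcal{F}_{k_n+2}]$.

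Once this identification is in place, the sequence $(S_n)_{n\ge 1}$ is a martingale adapted to the filtration $\mathcal{F}_{k_1+2}\subset\mathcal{F}_{k_2+2}\subset\cdots$, and Doob's maximal inequality in $L^2$ gives
\[
\int_{[0,1]}\sup_n|S_n(x)|^2\,dx\;\le\;4\,\|f\|_2^{\,2}\;=\;4\sum_{\nu=1}^{\infty}c_\nu^{\,2},
\]
so the proposition holds with $C=4$.

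The only real work is the bookkeeping in the middle step: verifying the ONB assertion for $\mathcal{E}^{k_n+2}_{[0,1]}$ and the orthogonality of the tail $\psi_\nu$'s ($\nu>m_n$) to $\mathcal{E}^{k_n+2}_{[0,1]}$. This is not a genuine obstacle because it is built into the inductive definition of the $\psi_\nu$ (they are chosen precisely to complete the ONB of each block $\mathcal{E}^{k_{n-1}+2,k_n+2}_{[0,1]}$); the argument is essentially an unpacking of the construction rather than a new estimate.
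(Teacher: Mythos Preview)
Your argument is correct. The identification $S_n=E[f\mid\mathcal{F}_{k_n+2}]$ follows, as you say, directly from the fact that $\psi_\nu\in\mathcal{E}^{k_{j-1}+2,k_j+2}_{[0,1]}$ for $\nu\in(m_{j-1},m_j]$, so each $\psi_\nu$ with $\nu>m_n$ has vanishing dyadic average at scale $2^{-(k_n+2)}$; Doob's $L^2$ inequality then gives $C=4$.

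As for comparison: the present paper does not actually prove Proposition~\ref{p:2} here, it quotes it from \cite{K:06}. The closest in-paper analogue is the proof of Proposition~\ref{p:3}, where the same projection-onto-$\mathcal{E}^{k_m+2}_{[0,1]}$ observation is made for the functions $h_1^{(k_n+1)}$ and the maximal partial sum is controlled by the Hardy--Littlewood maximal function rather than by Doob's inequality. For dyadic averages these two tools are essentially the same device, so your approach is in the same spirit; invoking Doob directly is arguably cleaner, since it bypasses the pointwise domination by the maximal function and gives the constant $4$ for free.
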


\begin{prop}\label{p:21}
The system $\{{g}_{n}^i(x): 0\leq i\leq 2^{2n}-1\}
_{n=1}^{\infty}$ is an orthonormal system of convergence.
  \end{prop}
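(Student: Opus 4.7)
The plan is to establish an $L^2$-maximal inequality for the system $\{g_n^i\}$ and then invoke the standard Menshov--Rademacher argument to deduce a.e.\ convergence for every $\{a_{n,i}\}\in l^2$. Any partial sum of the series $\sum_{n,i} a_{n,i} g_n^i$ can be written as a sum over ``completed levels'' $n<N$ plus a ``current level'' contribution $\sum_{i\leq J} a_{N,i} g_N^i$, so I would control those two pieces separately.

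Write $F_n = \sum_{i=0}^{2^{2n}-1} a_{n,i}\, g_n^i$. Proposition \ref{clm:1} asserts that $\{F_n\}_{n=1}^N$ is an independent system of mean-zero functions on $[0,1]$. Since independent mean-zero sums form a martingale with respect to the natural filtration generated by the $F_n$'s, Doob's $L^2$-maximal inequality yields
\[
\Big\|\sup_{N\ge 1}\Big|\sum_{n\leq N} F_n\Big|\Big\|_{L^2_{[0,1]}}
\leq 2\, \Big(\sum_n \|F_n\|_2^2\Big)^{1/2} = 2\,\Big(\sum_{n,i} a_{n,i}^2\Big)^{1/2},
\]
which controls the ``between levels'' maximal function.

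For the within-level estimate I would exploit the self-similar construction of $\widehat{g}_n^i$: on each dyadic interval $(2^{-k},2^{-k+1}]$ with $2\leq k\leq 2n^2$ it is a rescaled copy of the Menshov function $f_{2n}^i$, and identity (\ref{eq:9}) provides exact $L^2$-orthogonality on every dyadic subinterval of sufficient order. I would apply a Menshov--Rademacher-type maximal bound on each scale $k$ separately, sum the bounds in $k$, and exploit the normalization factor $n^{-1}$ present in the definition of $\widehat{g}_n^i$ to obtain a bound
\[
\Big\|\sup_J \Big|\sum_{i\leq J}a_{n,i}\,g_n^i\Big|\Big\|_2 \leq C_0\, \Big(\sum_i a_{n,i}^2\Big)^{1/2},
\]
uniform in $n$. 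The Rademacher piece supported on $(1/2,1]$ is handled independently using Khintchine/Kolmogorov.

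The main obstacle I anticipate is this within-level estimate: the Menshov functions $f_{2n}^i$ have $L^\infty$-norm of order $2^n$, and the dilations by the factors $2^{k/2}$ for $k\leq 2n^2$ threaten to blow them up much faster than the $n^{-1}$ prefactor absorbs. Demonstrating that the chosen normalization \emph{exactly} compensates requires careful bookkeeping; Lemma \ref{l:61}, on distribution functions under the $\natural_p$-operation, is the natural tool since $\widehat{g}_n^i$ is essentially a stacked dilation of $f_{2n}^i$. Once the two estimates are combined by splitting any partial sum into completed earlier levels plus the current partial level, the full $L^2$-maximal inequality follows, and a.e.\ convergence of $\sum a_{n,i} g_n^i$ for arbitrary $l^2$ coefficient sequences is immediate.
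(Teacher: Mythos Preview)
The paper does not prove Proposition \ref{p:21} here; it simply cites \cite{K:06}. So there is no paper proof to compare against directly, and your outline has to be judged on its own merits.

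Your between-level argument is sound: Proposition \ref{clm:1} gives independence of the blocks $F_n=\sum_i a_{n,i}g_n^i$, each $F_n$ has mean zero by (\ref{eq:4}), so the block partial sums form an $L^2$-martingale and Doob controls $\sup_N|\sum_{n\le N}F_n|$ in $L^2$.

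The gap is in the within-level step. Your claimed uniform bound
\[
\Big\|\sup_J\Big|\sum_{i\le J}a_{n,i}\,g_n^i\Big|\Big\|_2 \le C_0\Big(\sum_i a_{n,i}^2\Big)^{1/2}
\]
is false. On each dyadic scale $(2^{-k},2^{-k+1}]$, $2\le k\le 2n^2$, the maximal function is $n^{-1}2^{k/2}$ times a dilated copy of $\sup_J|\sum_i a_i f_{2n}^i|$; Menshov--Rademacher on the ONS $\{f_{2n}^i\}$ gives an $L^2$ bound with a $\log(2^{2n})=2n$ factor, so each scale contributes $\lesssim \sum_i a_i^2$ after the $n^{-2}$ cancellation. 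But there are $\sim 2n^2$ scales, and summing leaves an unavoidable factor $n^2$. The $n^{-1}$ prefactor compensates the Menshov--Rademacher logarithm, not the number of scales.

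What Lemma \ref{l:61} actually gives you is a \emph{weak}-type $(2,2)$ inequality, uniform in $n$: applied to $\Phi=\sup_J|\sum_i a_i f_{2n}^i|$ one gets
\[
\Big|\Big\{x:\sup_J\Big|\sum_{i\le J}a_{n,i}g_n^i(x)\Big|>t\Big\}\Big|\le C\,t^{-2}\sum_i a_{n,i}^2 .
\]
This is enough: summing over $n$ and invoking Borel--Cantelli shows that for a.e.\ $x$ the within-level oscillations $\sup_J|\sum_{i\le J}a_{N,i}g_N^i(x)|$ tend to $0$ as $N\to\infty$, which combined with the between-level a.e.\ convergence yields the result. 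So your strategy is right, but you must replace ``strong $L^2$'' by ``weak $(2,2)$ plus Borel--Cantelli'' in the within-level part.
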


 \

\subsection{ Construction of the second auxiliary CONS}
  In
this section our aim is to  transform  the set of orthogonal
functions $$ \{h_{1}^{({k_{n}+1})}(x)\}_{n=1}^{\infty}
 \bigcup \{\psi_\nu(x)\} _{\nu=1}^{\infty} $$  into
an orthonormal system of convergence $\{\xi_l(x)\}
_{l=1}^{\infty}.$ We will do that by the help of the orthogonal
matrices (see \cite{K:04}, Proposition 1)
\[
{\mathcal K}_N = (\kappa_{ij}^{(N+1)}) = \left(
\begin{array}{ccccc} { \frac1{\sqrt{N+1}}} & { \frac1{\sqrt{N+1}}}
& { \frac1{\sqrt{N+1}}} & \cdots & { \frac1{\sqrt{N+1}}}\\
1-\delta_N & -\delta_N & \cdots & -\delta_N  & {
\frac1{\sqrt{N+1}}} \\ -\delta_N & 1-\delta_N &
 \cdots & -\delta_N & {
\frac1{\sqrt{N+1}}}\\ \cdot & \cdot & \cdot &
 \cdots & \cdot\\
 -\delta_N & -\delta_N &
 \cdots & 1-\delta_N & {
\frac1{\sqrt{N+1}}}
\end{array} \right),
 \]
 where $\delta_N = {
\frac1{N}}(1+ { \frac1{\sqrt{N+1}}}). $
 Moreover, we will obtain some estimates
on $\|\xi_l\|_{L_{[\eta_l,1]}^\infty}$, where $\eta_l\to 0$ as
$l\to \infty.$
 Let
 $$
 q_0 =0, \quad q_n =
\left(2^{2(k_n+1)}-1\right)(m_n - m_{n-1}) \quad \mbox{for
any}\quad n\in {\IN}
$$
and put
\begin{equation}\label{add:3}
  p(n) =  2^{2(k_n+1)} \quad \mbox{and} \quad q_{\nu}(n) = \sum_{i=1}^{n-1}q_{i}+
(\nu-m_{n-1}-1) (2^{2(k_n+1)}-1)
\end{equation}  for all $n\in
{\IN}$ and $\nu ( m_{n-1}+1 \leq \nu \leq m_n).$

 Afterwards for any $\nu ( m_{n-1}+1
\leq \nu \leq m_n)$  and $1\leq j \leq p(n)$ we define
\begin{equation}\label{add:4}
 \phi_{j}^{\nu}(x) =
\kappa_{1j}^{(p(n))}\psi_{\nu}(x) +
\sum_{i=2}^{p(n)}\kappa_{ij}^{(p(n))}h_{1}^{k_{q_{\nu}(n)+i-1}+1}(x).
\end{equation}
By (\ref{ad:2})  and (\ref{add:3}),(\ref{add:4}) we have that for
any $ \nu\in [m_{n-1}+1, m_n]\cap{\IN}$
\begin{equation}\label{ad:3}
|\phi_{j}^{\nu}(x)| \leq \sqrt{2^{-k_n}}\quad \forall \, x\in
[2^{-k_{q_{\nu}(n)}-1},1]\quad\mbox{and}\quad \forall j\in
[1,p(n)]\cap{\IN}.
\end{equation}
From (\ref{ad:2}),(\ref{ad:4}) and (\ref{add:4}) follows that for
any $\nu\in [m_{n-1}+1, m_n]\cap{\IN}$ and any $n\in {\IN}$
\begin{equation}\label{phi:1}
\phi_{j}^{\nu}\cdot I_{[2^{-k_{n}-2},1]}\in {\mathcal
E}^{k_n+2}_{[0,1]} \quad \mbox{for }\quad 1\leq j\leq p(n).
\end{equation}
 In order to enumerate the obtained functions we
  put
 \begin{equation}\label{ro:1}
  \rho_0 =0, \, \rho_n = \sum_{j=1}^{n} 2^{2(k_j+1)}(m_j - m_{j-1})
\end{equation}
and denote $$ \rho_{\nu}(n) = \rho_{n-1}+ (\nu-m_{n-1}-1)
2^{2(k_n+1)} \, $$ for all $n\in {\IN}$ and $\nu ( m_{n-1}+1 \leq
\nu \leq m_n).$ Afterwards we denote
\begin{equation}\label{xi:1}
\xi_{l}(x) =
\phi_{j}^{\nu}(x)\quad \mbox{if}\quad l= \rho_{\nu}(n)+j, 1\leq
j\leq p(n)
\end{equation}
 and $m_{n-1}+1 \leq \nu \leq m_n.$ Hence, from
(\ref{phi:1}) it follows that for any $l\in {\IN}$ such that
\begin{equation}\label{el:1}
 \rho_{n-1}< l \leq \rho_{n}
 \end{equation}
 and any dyadic interval
$\Delta^{(k_n+2)}_{\nu} = (\frac{\nu-1}{2^{k_n+2}},
\frac{\nu}{2^{k_n+2}}),$ where $2\leq \nu \leq 2^{k_n+2}$ the
function $\xi_l$ is constant on the interval
$\Delta^{(k_n+2)}_{\nu}:$
\begin{equation}\label{cl:1}
 \xi_l(x) = \omega_{\nu}^{l} \quad
\mbox{for }\quad x\in \Delta^{(k_n+2)}_{\nu}\quad (2\leq \nu \leq
2^{k_n+2}).
\end{equation}
 Thus,
according to our construction, the set of functions $$
\bigcup_{n=1}^{\infty}\{{g}_{n}^i(x)\} _{i=0}^{2^{2n}-1}\bigcup
\{r_{{k_{n}}}(x)\}_{n=1}^{\infty} \bigcup \{\xi_l(x)\}
_{l=1}^{\infty} $$ is a CONS in $L^2_{[0,1]}.$

 We also have
\begin{prop}\label{p:3}
For any sequence $\{c_{l}\} _{l=1}^{\infty}\in l^2$
\begin{equation}
\int_{[0,1]}\sup_{n} |\sum_{l=1}^{m_n}c_{l}\xi_l(x)|^{2}dx \leq C
\sum_{l=1}^{\infty}c_{l}^2,
\end{equation}
 for some $C>0$
independent of coefficients.
  \end{prop}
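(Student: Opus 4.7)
My plan is to transfer the maximal inequality for the parent systems $\{\psi_\nu\}_\nu$ and $\{h_1^{k_n+1}\}_n$ to $\{\xi_l\}_l$ using Lemma~\ref{le:11}. The orthogonal matrices ${\mathcal K}_{p(n)-1}$ were designed precisely so that the definition (\ref{add:4}) matches the transform $f_i \mapsto \widetilde f_j$ of Lemma~\ref{le:11}: taking $N+1 = p(n)$, $f_1 = \psi_\nu$, and $f_{i+1} = h_1^{k_{q_\nu(n)+i}+1}$ for $1\le i\le p(n)-1$, the functions $\phi_j^\nu$ are exactly the $\widetilde f_j$ produced by that lemma.

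The first substantive step is to establish a mixed maximal inequality for the combined parent system: for any $\{b_\nu\},\{d_n\} \in l^2$,
\[
\int_{[0,1]}\sup_{M}\Bigl|\sum_{\nu\le M}b_\nu\psi_\nu(x) + \sum_{n\le M} d_n h_1^{k_n+1}(x)\Bigr|^{2} dx \le C_0 \Bigl(\sum_{\nu}b_\nu^2 + \sum_n d_n^2\Bigr).
\]
The $\psi_\nu$-part is Proposition~\ref{p:2}; the Haar part is the classical Haar maximal inequality (the $h_1^{k_n+1}$ are nested dyadic atoms near $0$); the cross terms are controlled by orthogonality together with the nested dyadic support structure visible in (\ref{phi:1}). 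Granted the constant $C_0$, Lemma~\ref{le:11} applied within each fixed $\nu$-block produces
\[
\int_{[0,1]}\max_{1\le j_0\le p(n)-1}\Bigl|\sum_{j=1}^{j_0}c_{\rho_\nu(n)+j}\,\phi_j^\nu(x)\Bigr|^{2} dx \le 14\,C_0 \sum_{j=1}^{p(n)-1} c_{\rho_\nu(n)+j}^{2}.
\]

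For the conclusion, I decompose a general partial sum of $\sum_l c_l\xi_l$ into contributions from completed $\nu$-blocks plus a partial final block. By orthogonality of ${\mathcal K}_{p(n)-1}$, a completed $\nu$-block equals $a_0^{(\nu)}\psi_\nu + \sum_{i\ge 1} a_i^{(\nu)} h_1^{k_{q_\nu(n)+i}+1}$ with $\sum_{i\ge 0}(a_i^{(\nu)})^2 = \sum_{j=1}^{p(n)} c_{\rho_\nu(n)+j}^2$; hence the sup over block-boundary partial sums is controlled by the mixed maximal inequality, while the partial-block tail is controlled by the block-wise estimate, and the triangle inequality in $L^2$ yields the stated proposition.

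The principal obstacle is the mixed maximal inequality itself: the $\psi_\nu$ carry no structure beyond membership in $\mathcal{E}^{k_{n-1}+2,k_n+2}_{[0,1]}$ and orthogonality, while the $h_1^{k_n+1}$ are sharply localized near $0$, so one must exploit orthogonality of the union system together with the nesting of dyadic supports carefully. Once that estimate is in hand, the remaining steps are essentially mechanical applications of Lemma~\ref{le:11} and orthogonality of the transformed coefficient vectors.
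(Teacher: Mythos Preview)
Your approach is essentially the paper's: both reduce the proposition to Proposition~\ref{p:2} for the $\psi_\nu$ contribution, Lemma~\ref{le:11} for the in-block maximal function, and a maximal inequality for the nested Haar functions $\{h_1^{(k_n+1)}\}$, which the paper obtains from the $L^2$-boundedness of the Hardy--Littlewood maximal operator. The ``mixed maximal inequality'' you single out as the principal obstacle is in fact routine, since $\sup_M|A_M+B_M|\le\sup_M|A_M|+\sup_M|B_M|$ pointwise and the $L^2$ triangle inequality then eliminates any cross terms---no separate appeal to orthogonality or support structure is needed there, and the paper accordingly treats this step as immediate.
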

  \begin{proof} The Proposition  \ref{p:3} follows immediately from
  Proposition \ref{p:2} and Lemma \ref{le:11}. We should check that the conditions of Lemma \ref{le:11} are
  satisfied for the system  $\{h_{1}^{({k_{n}+1})}(x)\}_{n=1}^{\infty}$.

  If we denote $h(x)= \sum_{n=1}^{\infty}a_{k_{n}}h_{1}^{({k_{n}+1})}(x)$ then
its orthogonal projection  $P(f)$ onto the subspace ${\mathcal
E}^{k_m+2}_{[0,1]}$   equals
\[
|\Delta^{(k_m+2)}_{\nu}|^{-1}\int_{\Delta^{(k_m+2)}_{\nu}} h(t)dt\,
\mbox{ on the interval}\, \Delta^{(k_m+2)}_{\nu}, 1\leq \nu\leq
2^{k_m+2}.
\]
 Thus $|\sum_{n=1}^{m}a_{k_{n}}h_{1}^{({k_{n}+1})}(x)|$ can be
estimated  by the Hardy-Littlewood maximal function of $f$. Hence,
applying the  $L^2_{[0,1]}\rightarrow L^2_{[0,1]} $ boundedness of
the indicated operator (cf. \cite{Zy:59}) we finish the proof.
\end{proof}

\

\subsection{ Construction of a Euclidean system of bounded functions   }

On this step we construct a Euclidean system of bounded functions by
 transformation of finite collections of functions from the ONS
  \[
\bigcup_{n=1}^{\infty}\{{g}_{n}^i(x)\} _{i=0}^{2^{2n}-1} \bigcup
\{\xi_l(x)\} _{l=1}^{\infty}
\]
  applying  the orthogonal matrices
${\mathcal K}_N.$ For any $n\in {\IN}$ we put $l(n) = 2^{2n}+1$ and define
 \begin{equation}\label{up:01}
 \Upsilon_{j}^{n}(x) =
\kappa_{1j}^{(l(n))}\xi_{n}(x) +
\sum_{i=2}^{2^{2n}+1}\kappa_{ij}^{(l(n))}{g}_{n}^{i-2}(x) \quad
1\leq j \leq 2^{2n}+1.
\end{equation}
  Evidently, the obtained system
of functions
 \[
 \left\{ \{{\Upsilon}_{j}^n(x)\} _{j=1}^{2^{2n}+1}
\right \}_{n=1}^{\infty}
\]
 is again an ONS. We enumerate them in
the natural order: for
 \begin{equation}\label{up:02}
 \mu_{0} = 0, \quad  \mu_{m} = \quad
\mu_{m-1}+ 2^{2m}+ 1
\end{equation}
we put
\begin{equation}\label{up:1}
 \Upsilon_{k}(x) =
\Upsilon_{j}^{m}(x) \quad \mbox{if}\quad k=  \mu_{m-1} + j, \quad
\mbox{where}\quad 1\leq j \leq 2^{2m}+1.
\end{equation}
\begin{thm}\label{t:10}
The ONS $\{\Upsilon_{k}(x)\}_{k=1}^{\infty}$ is
a system of convergence.
\end{thm}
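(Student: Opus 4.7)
The plan is to exploit the block structure of the enumeration~(\ref{up:1}) and the orthogonality of the Kashin-type matrices $\mathcal{K}_N$ to reduce convergence of $\sum_k a_k\Upsilon_k$ to convergence of the $\xi$- and $g$-systems, which is already in hand via Propositions~\ref{p:21} and~\ref{p:3}. Given $\{a_k\}\in l^2$, set $V_m(x)=\sum_{j=1}^{l(m)}a_{\mu_{m-1}+j}\Upsilon_j^m(x)$. Since~(\ref{up:01}) is a unitary change of basis between $\{\xi_m,g_m^0,\ldots,g_m^{2^{2m}-1}\}$ and $\{\Upsilon_j^m\}_{j=1}^{l(m)}$, inverting it gives
\[V_m(x)=b_m\xi_m(x)+\sum_{i=0}^{2^{2m}-1}c_{m,i}g_m^i(x),\]
where Parseval yields $b_m^2+\sum_i c_{m,i}^2=\sum_{j=1}^{l(m)}a_{\mu_{m-1}+j}^2$. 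Summing over $m$ one obtains $\sum_m b_m^2+\sum_{m,i}c_{m,i}^2=\sum_k a_k^2<\infty$.

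The block-boundary partial sums then become
\[S_{\mu_M}(x)=\sum_{m=1}^M b_m\xi_m(x)+\sum_{m=1}^M\sum_{i=0}^{2^{2m}-1}c_{m,i}g_m^i(x).\]
The first sum converges a.e. by Proposition~\ref{p:3}, whose $L^2$-maximal estimate gives a.e. convergence of the partial sums via the standard Menshov-Rademacher tail argument. The second sum converges a.e. by Proposition~\ref{p:21}. Hence $S_{\mu_M}(x)$ converges a.e. to some limit $S_\infty(x)$ as $M\to\infty$.

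To control partial sums $S_N$ with $\mu_{m-1}\leq N<\mu_m$, introduce the block maximal $M_m(x)=\max_{1\leq J\leq l(m)-1}\bigl|\sum_{j=1}^J a_{\mu_{m-1}+j}\Upsilon_j^m(x)\bigr|$, and apply Lemma~\ref{le:11} with $N+1=l(m)$, $f_1=\xi_m$, $f_{i+2}=g_m^i$. This gives $\int M_m^2\,dx\leq 14\,C_m(b_m^2+\sum_i c_{m,i}^2)$, where $C_m$ is the Menshov-type maximal constant for the block system. A uniform bound $C_m\leq C$ then yields $\sum_m\int M_m^2\,dx\leq 14\,C\sum_k a_k^2<\infty$, so $M_m\to 0$ a.e.; combined with the previous step this produces $S_N\to S_\infty$ a.e.

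The main obstacle is securing the uniform bound $C_m\leq C$ for the within-block system $\{\xi_m,g_m^0,\ldots,g_m^{2^{2m}-1}\}$. The $\xi_m$-term contributes trivially (a single orthonormal function), so the task reduces to a Menshov-type maximal inequality for the rescaled Menshov functions $\{g_m^i\}_{i=0}^{2^{2m}-1}$ with constant independent of $m$. I would derive this from the dyadic-block structure of the $g_m^i$, the independence supplied by Proposition~\ref{clm:1}, and an $S_p$-maximal estimate furnished by Proposition~\ref{pro:1} for some $p>2$, which on $[0,1]$ entails the required $L^2$-bound.
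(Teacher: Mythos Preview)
Your block decomposition and the handling of the block-endpoint sums $S_{\mu_M}$ via Propositions~\ref{p:21} and~\ref{p:3} are exactly what the paper intends. The gap is in the within-block step: the uniform bound $C_m\le C$ you require fails. On $[0,\tfrac12]$ the function $\widehat g_m^i$ consists of roughly $2m^2$ disjoint $L^2$-rescaled copies of $m^{-1}f_{2m}^i$; on each copy the squared maximal function integrates to $m^{-2}$ times the Menshov--Rademacher constant $C(2m)^2\sum_i a_i^2$, i.e.\ $\le C'\sum_i a_i^2$ per copy, but summing over the $\approx 2m^2$ copies gives $C_m\asymp m^2$. Your suggested route does not repair this: Proposition~\ref{clm:1} gives independence \emph{across} the index $n$, not among $\{g_m^i\}_i$ for fixed $m$, and the $g_m^i$ are not an $S_p$ system on $[0,\tfrac12]$ --- for $p>2$ the $L^p$ norm of $\sum_i a_i\widehat g_m^i$ over the dyadic pieces grows like $2^{m^2(p-2)/p}$, so Proposition~\ref{pro:1} is inapplicable.

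What the paper's one-line proof actually hides is the explicit decomposition behind Lemma~\ref{le:11} rather than a per-block $L^2$ application. For $1\le J\le 2^{2m}$ and $N=2^{2m}$,
\[
\sum_{j=1}^{J}a_{\mu_{m-1}+j}\Upsilon_j^m
=\frac{\sum_{j\le J}a_{\mu_{m-1}+j}}{\sqrt{N+1}}\,\xi_m
+\sum_{j=1}^{J}a_{\mu_{m-1}+j}\,g_m^{j-1}
-\delta_N\Big(\sum_{j\le J}a_{\mu_{m-1}+j}\Big)\sum_{i=0}^{N-1}g_m^i.
\]
The supremum over $J$ of the first and third terms has $L^2$-square bounded by a constant times $A_m=\sum_j a_{\mu_{m-1}+j}^2$ (Cauchy--Schwarz together with $\|\xi_m\|_2=1$, respectively $\delta_N\sqrt N\le 2N^{-1/2}$ and $\bigl\|\sum_i g_m^i\bigr\|_2^2=N$), hence is summable over $m$. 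The middle term is a partial sum of the \emph{global} series $\sum_{m,j}a_{\mu_{m-1}+j}\,g_m^{j-1}$ with $l^2$ coefficients, and Proposition~\ref{p:21} gives its a.e.\ convergence directly; consequently its within-block oscillations tend to $0$ a.e.\ without any per-block $L^2$ maximal bound. Invoking the global convergence of the $g$-system for this middle term, rather than a block-wise $L^2$ estimate, is the idea your argument is missing.
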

\begin{proof} The theorem is an immediate consequence of Lemma
\ref{le:11} and Propositions \ref{p:21} and \ref{p:3}.
\end{proof}

\begin{thm}\label{t:11} The ONS $\{\Upsilon_{k}(x)\}_{k=1}^{\infty}$ is
a system of divergence.
\end{thm}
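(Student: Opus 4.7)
The plan is to prove that for every sequence $(a_k)_{k=1}^\infty \notin \ell^2$, the series $\sum_{k=1}^\infty a_k \Upsilon_k$ diverges almost everywhere on $[0,1]$ (this is the definition of divergence system given in Section 1 of the paper). The strategy is to undo the block orthogonal transformation defining the $\Upsilon_j^n$, isolate the Rademacher independence hidden inside the $g_n^i$, and apply Kolmogorov's classical theorem for independent symmetric bounded functions.

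Group the coefficients into the natural blocks of length $l(n)=2^{2n}+1$ by setting $a_j^n := a_{\mu_{n-1}+j}$. By (\ref{up:01}) and orthogonality of $\mathcal{K}_{l(n)-1}$, the $n$-th block increment of the partial sum satisfies
\[
T_n := \sum_{j=1}^{l(n)} a_j^n\,\Upsilon_j^n \;=\; b_1^n\,\xi_n + \sum_{i=2}^{l(n)} b_i^n\, g_n^{i-2},
\qquad \sum_{i=1}^{l(n)} (b_i^n)^2 \;=\; \sum_{j=1}^{l(n)} (a_j^n)^2,
\]
with $b_i^n := \sum_j a_j^n\,\kappa_{ij}^{(l(n))}$. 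Since $(a_k)\notin\ell^2$, $\sum_n \sum_i (b_i^n)^2 = \infty$; if $\sum_k a_k \Upsilon_k$ converged a.e., then so would the subsequence $S_{\mu_m} = \sum_{n\le m} T_n$.

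Next, I expose the Rademacher structure inside $g_n^i$. By the definition of $\widehat g_n^i$ on $(1/2,1]$ and the rescaling (\ref{eq:7}), there is a set $E\subset[0,1]$ of positive measure on which each $g_n^i$ reduces to the $\pm 1$-valued Rademacher function $r_{k_{n-1}+4+i}$ composed with the rescaling. Because the indices $k_{n-1}+4+i$ are pairwise distinct across all $(n,i)$ (by the rapid growth of $k_n$ built into the construction of Section \ref{sim:1}), Proposition \ref{clm:1} together with the classical independence of the Rademacher system yields that $\{g_n^i\}_{n,i}$ is an independent system on $E$. Meanwhile (\ref{ad:3}) combined with (\ref{xi:1})--(\ref{cl:1}) gives the pointwise bound $|\xi_n(x)|\le C\,2^{-k_n/2}$ on $E$. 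Split into two cases: (A) $\sum_n\sum_{i\ge 2}(b_i^n)^2=\infty$, or (B) $\sum_n\sum_{i\ge 2}(b_i^n)^2<\infty$ but $\sum_n(b_1^n)^2=\infty$.

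In case (A), Kolmogorov's classical theorem for series of independent symmetric bounded random variables would force $\sum_n\sum_{i\ge 2}(b_i^n)^2<\infty$ from a.e.\ convergence of $\sum_n \sum_{i\ge 2}b_i^n g_n^{i-2}$ on $E$ --- a contradiction; and the $\xi_n$-contribution on $E$ is $L^2(E)$-summable via $|\xi_n|\le C\,2^{-k_n/2}$, hence cannot cancel the divergence. In case (B) one applies the analogous argument on a neighborhood of $0$ where the $\xi_n$ dominate: by (\ref{add:4}) each $\xi_l=\phi_j^\nu$ is a $\mathcal{K}_{p(n)-1}$-transform of $\psi_\nu$ together with the Haar atoms $h_1^{(k_q+1)}$, and these Haar atoms possess an analogous independence structure on the complement of $E$, so Kolmogorov again forces $\sum_n(b_1^n)^2<\infty$, contradicting (B). In either case $\sum_n T_n$ diverges on a set of positive measure, whence $\sum_k a_k \Upsilon_k$ diverges a.e.\ on $[0,1]$ after the standard Kolmogorov $0$--$1$-law extension from a.e.\ on the set to a.e.\ on $[0,1]$. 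The main obstacle is the rigorous verification of the global independence of $\{g_n^i\}_{n,i}$ on a single fixed set $E$ and the parallel statement for the $\xi_n$-system on the complement; both rely on Proposition \ref{clm:1} and on the careful choice of scales $k_n$ made during the construction.
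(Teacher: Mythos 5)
Your block decomposition of $\sum_k a_k\Upsilon_k$ into $T_n=b_1^n\xi_n+\sum_{i\ge 2}b_i^n g_n^{i-2}$ with preservation of $\ell^2$ norms is correct and matches the opening of the paper's argument (the proof of Theorem \ref{t:12}, of which Theorem \ref{t:11} is a special case), but the divergence mechanism you then invoke is not the one present in this construction, and both of your independence claims fail. First, there is no single set $E$ of positive measure on which every $g_n^i$ reduces to a Rademacher function: by (\ref{eq:7}) the set on which $g_n^i$ takes the values of a rescaled $r_{k_{n-1}+4+i}$ is $A_n=\{x:\ 2^{k_{n-1}+2n^2+2}x \bmod 1\in(\tfrac12,1]\}$, a set of measure $\tfrac12$ that changes with $n$, and $\bigcap_n A_n$ is a null set. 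Second, even within a fixed block the functions $g_n^0,\dots,g_n^{2^{2n}-1}$ are not independent: off $A_n$ they are translates $M_{2n,i}$ of one Menshov function, and Proposition \ref{clm:1} gives independence only of \emph{one} linear combination per block, across distinct blocks, never of the individual $g_n^i$ within a block. So Kolmogorov's theorem cannot be applied to the family $\{g_n^i\}_{n,i}$. Third, case (B) of your argument is untenable: the building blocks $h_1^{(k_n+1)}$ of the $\xi_l$ have nested supports $(0,2^{-k_n-1})$ shrinking to the origin, hence are as far from independent as possible, and any series in them converges pointwise on $(0,1]$; indeed $\{\xi_l\}$ is a convergence system (Proposition \ref{p:3}), so no divergence can be extracted from the $\xi$-part in isolation.

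What your proposal is missing is that the divergence is driven entirely by the Menshov functions together with the special shape of ${\mathcal K}_N$ (constant first row). When the $\xi$-coefficient $\beta_m=\frac1{\sqrt{2^{2m}+1}}\sum_j\alpha_j^m$ is comparable to ${\mathcal M}_m$, this forces $|\sum_j\alpha_j^m|$ to be of order $2^m{\mathcal M}_m$, and Lemmas 1 and 2 of \cite{K:04} then give the lower bound $\int_{E_{s,\nu}}\sup_v\big|\sum_{j\le v}\alpha_j^m\Upsilon_j^m\big|\,dx\ \geq\ c\,2^{\tau_s/2}\,|E_{s,\nu}|\,2^{-m}\big|\sum_j\alpha_j^m\big|$ on the rescaled sets $E_{s,\nu}$; combined with an $L^2$ upper bound and a Paley--Zygmund-type estimate this produces, inside every dyadic interval of the appropriate generation, a subset of relative measure at least $c_\epsilon{\mathcal M}_m^2$ on which the maximal block partial sum exceeds a fixed $\epsilon/200$, and a $\limsup$ argument over the blocks then yields divergence on a set of full measure. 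Only when $\beta_m$ is small relative to ${\mathcal M}_m$ for every $\epsilon$ does the paper fall back on the across-block independence of Proposition \ref{clm:1}, as in \cite{K:04}. Your proposal contains neither the Menshov lower bound nor a valid passage from ``divergence on a positive-measure subset of each block'' to ``divergence a.e.'' (your appeal to a $0$--$1$ law is unavailable precisely because the relevant functions are not globally independent), so it does not establish the theorem.
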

The proof of Theorem \ref{t:11}  is a particular case of the proof
of Theorem \ref{t:12} which will be given  in the next section.
However, while explaining the idea of the construction of the
system $\Theta$  we will refer the system
$\{\Upsilon_{k}(x)\}_{k=1}^{\infty}$ as a Euclidean system.

\

\section{A uniformly bounded  complete Euclidean system }\label{ubos:1}

We have constructed a Euclidean system
$\{\Upsilon_{k}(x)\}_{k=1}^{\infty}$ such that
\begin{equation} \label{sis01}
\{\Upsilon_{k}(x)\}_{k=1}^{\infty}\bigcup
\{r_{k_n}(x)\}_{n=1}^{\infty}
\end{equation}
is a complete ONS. If
we enumerate the system (\ref{sis01}) in some order then {\rm a
priori} it is not clear that the obtained system will be a complete Euclidean system .
 Evidently it will be a convergence system.  Whether
the obtained system is a divergence system or not is far from being
clear. In our particular case this problem  is solved mainly with
the help of Proposition \ref{clm:1}. Moreover, for the proof of Theorem
\ref{t:1} we need a stronger property which we explain after
enumerating the system $\{\Upsilon_{k}(x)\}_{k=1}^{\infty}\bigcup
\{r_{k_n}(x)\}_{n=1}^{\infty}$ in a special  way. Let $M> \sqrt2 +1,$ be the constant from Theorem \ref{t:1} and let $l_{0} \in \mathbb{N}$ be such that
$$\sqrt{2^{-l_{0}}} < M -\sqrt2 -1.$$

 Afterwards,  we put
\begin{equation*}\label{om:0}
 \chi_{0}^{(i)}(x) =   r_{k_{i}}(x), \quad  1 \leq i\leq \nu_0,
\end{equation*}
where $\nu_0\in \mathbb{N} $ is such that
\begin{equation*}\label{om:01}
 \Upsilon_{1} \in  {\mathcal E}^{k_{\nu_0}}_{[0,1]}.
\end{equation*}

Then we define
\begin{equation*}\label{om:1}
 \chi_{1}^{(1)}(x) = \Upsilon_{1}(x)\qquad \mbox{and}\quad
 \chi_{1}^{(i)}(x)= r_{k_{\nu_0+ i-1}}(x), \quad  1 < i \leq
 2^{n_1},
\end{equation*}
where $n_1\in \IN$ is such that $ n_{1} \geq  k_{\nu_0}+ l_{0}$
\begin{equation*}\label{om:11}
 \Upsilon_{2} \in  {\mathcal E}^{k_{\nu_1}}_{[0,1]}\quad \mbox{and}\quad \nu_{1} = 2^{n_1}+\nu_0.
\end{equation*}

In the same way for any $j\geq 2 $ we define
\begin{equation}\label{om:2}
 \chi_{j}^{(1)}(x) = \Upsilon_{j}(x)\qquad \mbox{and}\quad
 \chi_{j}^{(i)}(x)= r_{k_{\nu_{j-1}+ i-1}}(x), \quad  1 < i \leq
 2^{n_j},
\end{equation}
where  $n_j\in
\IN$ is such that $n_{j} \geq  k_{\nu_{j-1}} + l_{0}$
\begin{equation}\label{om:21}
 \Upsilon_{j+1} \in  {\mathcal E}^{k_{\nu_{j}}}_{[0,1]} \quad   \mbox{and}\quad \nu_{j}= 2^{n_{j}}+\nu_{j-1}.
\end{equation}

If we transform the functions
\begin{equation}\label{om:22}
 \Upsilon_{j}(x),  r_{k_{\nu_{j-1}}+i}(x), \quad  1 < i \leq
 2^{n_j}
\end{equation}
by the    Haar matrix $H_{n_j}$  then it is easy to check that the obtained  orthonormal functions are
bounded by the constant $M.$

 By (\ref{up:01})--(\ref{up:1}) we can consider that the numbers $k_{\nu_{j-1}}$ are chosen so that
 for any $m\in \IN$
\begin{equation}\label{om:23}
 \Upsilon_{j } \in  {\mathcal E}^{k_{\nu_{\mu_{m-1}}}}_{[0,1]}
  \quad \mbox{for all}\quad \mu_{m-1}+1\leq j\leq
 \mu_m.
\end{equation}

 Denote
\begin{equation}\label{om:3}
 \chi_{k}(x)=
 \chi_{0}^{(k)}(x), \qquad \mbox{where}\quad
 1 \leq k \leq \nu_0
\end{equation}
\begin{equation}\label{om:31}
 \chi_{k}(x)=
 \chi_{j}^{(i)}(x), \qquad \mbox{where}\quad  k= \nu_{j-1} + j +i-1;
 \end{equation}
\[ 1 \leq i \leq \nu_{j-1} +  2^{n_j} \quad \mbox{and}\quad j\geq
1.\]

Then the system $\{\chi_{k}(x)\}_{k=1}^{\infty}$ will be a
complete ONS.
 Moreover, the following
assertion is true.
\begin{thm}\label{t:12} For any $\{a_{k}\}_{k=1}^{\infty}\notin l^2$
the partial sums
\begin{equation}\label{ser:0}
S_j(x) = \sum_{k=1}^{\nu_{j} + j-1} a_k\chi_{k}(x), \qquad j\in \IN,
\end{equation}
 diverge a.e. on $[0,1],$ when $j\to +\infty.$
\end{thm}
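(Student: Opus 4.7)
Plan. First I unfold $S_j$ by splitting each $\chi_k$: the Rademacher entries $r_{k_\cdot}$ form a sum $R_j$, while each $\Upsilon_i$ is expanded by (\ref{up:01}) into a $\xi_m$-piece plus a combination of $g_m^i$'s, yielding $S_j=\Xi_j+G_j+R_j$. Because $S_j$ stops exactly at the end of block $j$ (index $\nu_j+j-1$), every $\Upsilon$-block $m\le j$ is summed in full, and orthogonality of the matrix $\mathcal K_{l(m)-1}$ distributes the $l^2$-mass $\beta_m=\sum_i b_i^2$ of that block's $\Upsilon$-coefficients $\{b_i\}_{\mu_{m-1}<i\le\mu_m}$ into a single $\xi_m$-coefficient $\sigma_m=l(m)^{-1/2}\sum_i b_i$ together with $2^{2m}$ coefficients of $\{g_m^i\}_{i=0}^{2^{2m}-1}$ of squared sum $\beta_m-\sigma_m^2$. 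The hypothesis $\{a_k\}\notin l^2$ then reads as $\sum_m\sigma_m^2+\sum_m(\beta_m-\sigma_m^2)+\sum(\text{Rademacher coefficients})^2=\infty$.

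The driving mechanism is the Parseval identity (\ref{eq:9}): on any small dyadic interval $\Delta^{(k)}_\nu$ with $k\le k_{m-1}+2m^2+2$, the squared $L^2$-norm of the $g$-part of $G_j-G_{\mu_{m-1}}$ over $\Delta^{(k)}_\nu$ equals $|\Delta^{(k)}_\nu|(\beta_m-\sigma_m^2)$. Combined with the spike concentration of the $g_m^i$'s near the left endpoint of each of their periods, this forces $\|G_\cdot\|_{L^\infty(\Delta^{(k)}_\nu)}$ to blow up on a shrinking nested family of intervals as soon as $\sum_m(\beta_m-\sigma_m^2)=\infty$; a Borel--Cantelli argument across the blocks, whose differences are mutually independent by Proposition \ref{clm:1}, promotes this pointwise-somewhere blow-up to a.e.\ divergence of $G_j$ on $[0,1]$. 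The classical Khintchine--Kolmogorov criterion handles a.e.\ divergence of $R_j$ when its Rademacher coefficients fail to be in $l^2$, and the same independence from Proposition \ref{clm:1} prevents $G_j$ and $R_j$ from cancelling each other a.e.

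The main obstacle is the residual regime in which almost all the divergent $l^2$-mass sits in the $\xi$-coefficients $\{\sigma_m\}$, while $\sum_m(\beta_m-\sigma_m^2)$ and the Rademacher coefficient sums are finite. Since the estimate (\ref{ad:3}) confines $\xi_l$ to a tiny neighborhood of $0$ (with $|\xi_l(x)|\le\sqrt{2^{-k_\cdot}}$ on $[2^{-k_\cdot-1},1]$), $\Xi_j$ is again ``spike-like'' near $0$, and to show that $\sum_m\sigma_m\xi_m$ diverges a.e.\ I iterate the same Parseval-spike argument one level deeper: by (\ref{add:4}) the $\xi_m$'s are themselves $\mathcal K$-transforms of $\psi$-functions and Haar spikes $h_1^{(k_\cdot+1)}$, and a second application of the analog of (\ref{eq:9}) produces on small dyadic intervals a Rademacher-type tail whose a.e.\ divergence is classical. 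The scale separations built in by (\ref{om:23}) and by the condition $n_j\ge k_{\nu_{j-1}}+l_0$ in (\ref{om:21}) are precisely what makes this iteration terminate cleanly and yield a.e.\ divergence of $S_j=\Xi_j+G_j+R_j$ on all of $[0,1]$.
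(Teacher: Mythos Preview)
Your decomposition $S_j=\Xi_j+G_j+R_j$ is legitimate, and when $\sum_m\sigma_m^2<\infty$ the $\xi$-part $\Xi_j$ converges a.e.\ (Proposition~\ref{p:3}), so Kolmogorov's three--series theorem together with the independence of the $g$-blocks and of the $r_{k_j}$ from Proposition~\ref{clm:1} does dispose of the case where the divergent $l^2$-mass sits in the $g$- or Rademacher coefficients. (Note, however, that Proposition~\ref{clm:1} asserts independence only among the $F_j$'s and the $r_{k_j}$'s; it says nothing about $\xi_m$ being independent of either, so your remark that independence ``prevents $G_j$ and $R_j$ from cancelling'' $\Xi_j$ is unsupported. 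This is harmless only because in this sub-case $\Xi_j$ actually converges.)

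The real gap is your ``main obstacle'' case $\sum_m\sigma_m^2=\infty$. You propose to prove that $\sum_m\sigma_m\xi_m$ diverges a.e.\ by unfolding each $\xi_m$ via (\ref{add:4}) into $\psi_\nu$-- and $h_1^{(k_\cdot+1)}$--pieces and ``iterating the Parseval--spike argument one level deeper''. This cannot succeed: the system $\{\xi_m\}$ is built precisely to be a \emph{convergence} system (Proposition~\ref{p:3}); its constituents $\psi_\nu$ and $h_1^{(k_\cdot+1)}$ carry no Menshov-type divergence mechanism, and no analog of (\ref{eq:9}) produces a ``Rademacher-type tail'' out of them. Asserting a.e.\ divergence of $\sum_m\sigma_m\xi_m$ for every $\{\sigma_m\}\notin l^2$ is tantamount to claiming that $\{\xi_m\}$ is itself a divergence system, which is neither true nor needed.

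The paper handles this regime by a completely different idea that your plan omits. When $|\beta_m|$ (your $\sigma_m$) is comparable to $\mathcal M_m$, i.e.\ $m\in\Omega_\epsilon$, one has $\bigl|\sum_{j}\alpha_j^m\bigr|=\sqrt{2^{2m}+1}\,|\beta_m|\gtrsim 2^m\mathcal M_m$, and the Menshov functions $M_{2m}$ hidden in $g_m^{j-1}$ (via $f_{2m}^{j-1}$ and Lemma~\ref{lem:1}) force the \emph{intermediate} maximal function
\[
\sigma_m^*(x)=\sup_{1\le v\le 2^{2m}}\Bigl|\sum_{j=1}^v\alpha_j^m\Upsilon_j^m(x)\Bigr|
\]
to be large on a set of definite measure inside every small dyadic interval (Lemmas~1 and~2 of \cite{K:04}). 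The divergence therefore comes from the oscillation of $S_j$ at indices $j$ strictly \emph{inside} each super-block $(\mu_{m-1},\mu_m]$, not from the block-endpoint values $\Xi_{\mu_m}$, $G_{\mu_m}$ that your argument tracks. This Menshov-function mechanism is the heart of the proof, and without it the case $\sum_m\sigma_m^2=\infty$ cannot be closed.
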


\subsection{  Proof of  Theorem \ref{t:12}}

 Let us rewrite the
series $\sum_{k=1}^{\infty} a_k\chi_{k}(x)$ as
\begin{equation}\label{ser:1}
\sum_{i=1}^{\nu_{0}} a_0^{(i)}\chi_{0}^{(i)}(x) +
\sum_{j=1}^{\infty} \sum_{i=1}^{2^{n_{j}}}
a_j^{(i)}\chi_{j}^{(i)}(x)
\end{equation}
and observe that the partial sums $$ \sum_{i=1}^{\nu_{0}}
a_0^{(i)}\chi_{0}^{(i)}(x) + \sum_{j=1}^{N} \sum_{i=1}^{2^{n_{j}}}
a_j^{(i)}\chi_{j}^{(i)}(x) $$ coincide with the corresponding
partial sums (\ref{ser:0}).

 If $\sum_{j=1}^{\infty}
|a_j^{(1)}|^2 < +\infty$ then by Theorem \ref{t:10} and well known
properties of the Rademacher system (see \cite{Zy:59}) we
immediately obtain that the sequence (\ref{ser:0}) diverges a.e.
when $j\to +\infty.$ Thus we have to consider only the case when
\begin{equation}\label{ser:2}
\sum_{k=1}^{\infty} |a_k^{(1)}|^2 = +\infty.
\end{equation}
For any $m\in \IN$ we put
 \begin{equation}\label{ser:3}
\alpha_{j}^{m}= a_{k}^{(1)}  \quad \mbox{if} \quad k= \mu_{m-1}
+j, \quad \mbox{where} \quad 1\leq j \leq 2^{2m}+1.
\end{equation}
 According to
the construction we have
\begin{eqnarray}\label{ser:4}
\sum_{k=\mu_{m-1}+1}^{\mu_{m}} \sum_{i=1}^{2^{n_{k}}}
a_k^{(i)}\chi_{k}^{(i)}(x) &=&
\sum_{k=\mu_{m-1}+1}^{\mu_{m}}a_k^{(1)} \Upsilon_{k}(x) +
\Psi_m(x)\\ &=& \sum_{j=
1}^{2^{2m}+1}\alpha_{j}^{m}\Upsilon_{j}^{m}(x) + \Psi_m(x)\\ &=&
\beta_{m}\xi_{m}(x) + \sum_{i=
0}^{2^{2m}-1}\gamma_{m}^{i}g_{m}^{i}(x) + \Psi_m(x),
\end{eqnarray}
where
 \begin{equation}\label{ser:5}
\Psi_m(x) = \sum_{k=\mu_{m-1}+1}^{\mu_{m}} \sum_{i=2}^{2^{n_{k}}}
a_k^{(i)}\chi_{k}^{(i)}(x) = \sum_{k=\mu_{m-1}+1}^{\mu_{m}}
\sum_{i=2}^{2^{n_{k}}} a_k^{(i)} r_{k_{i+\nu_{j-1}-1}}(x)
\end{equation}
 and
\[
 \beta_{m} = \frac1{\sqrt{2^{2m}+1}} \sum_{j=
1}^{2^{2m}+1}\alpha_{j}^{m} = \int_{0}^{1}[\sum_{j=
1}^{2^{2m}+1}\alpha_{j}^{m}\Upsilon_{j}^{m}(x)]\xi_{m}(x) dx.
\]
Let
 \[
  {\mathcal
M}_{m}^{2} = \sum_{k=\mu_{m-1}+1}^{\mu_{m}}|a^{(1)}_{k}|^2 =
(\beta_{m})^2 + \sum_{i= 0}^{2^{2m}-1}(\gamma_{m}^{i})^2.
\]

The Cauchy inequality yields
 $$ | \beta_{m}| \leq {\mathcal
M}_{m} \quad \mbox{for all} \quad m\in{\IN}.$$

 For any  $ \epsilon \in (0,1] $ we denote $$ \Omega_{\epsilon} =
\left\{m\in{\IN}: \frac{\epsilon}4{\mathcal M}_{m} \leq  |
\beta_{m}|\right\}$$ and  $$ \Omega_{\epsilon}^c = {\bf
N}\setminus \Omega_{\epsilon} = \left\{m\in{\IN}: \frac{\epsilon}4
{\mathcal M}_{m}
>  | \beta_{m}|\right\}.$$ Evidently $ \Omega_{\epsilon} \supseteq
 \Omega_{\delta}$ for any $0 < \epsilon < \delta \leq 1.$
 The proof will be divided into two main
parts.

\

\subsubsection { The case  $\sum_{s =1 }^{\infty} {\mathcal
M}_{m(s)}^{2} = +\infty,$ { for some} $\{ m(s) \}_{s=1}^{\infty}
\subset \Omega_{\epsilon}$}

 Suppose $\{ m(s) \}_{s=1}^{\infty} \subset \Omega_{\epsilon}$ is any subsequence of
 natural numbers in $\Omega_{\epsilon}$ such that
 \begin{equation}\label{eq:10}
 \sum_{s =1 }^{\infty} {\mathcal
M}_{m(s)}^{2} = +\infty. 
\end{equation}
 Without loss in
generality we can suppose that
\begin{equation}\label{ms:1}
 2^{-s} \leq {\mathcal
M}_{m(s)}
\end{equation}
 because (\ref{eq:10})
remains true after deleting the terms that does not satisfy   the
condition (\ref{ms:1}). Let
\begin{equation}\label{ser:11}
S_m^{v}(x) = \sum_{k=\mu_{m-1}+1}^{v} \sum_{i=1}^{2^{n_{k}}}
a_k^{(i)}\chi_{k}^{(i)}(x)= \sum_{k=\mu_{m-1}+1}^{v}a_k^{(1)}
\Upsilon_{k}(x) + \Psi_m^{v}(x),
\end{equation}
where $ \mu_{m-1}+1 \leq v \leq \mu_{m}  $ and 
 \begin{equation}\label{ser:12}
\Psi_m^{v}(x) = \sum_{k=\mu_{m-1}+1}^{v} \sum_{i=2}^{2^{n_{k}}}
a_k^{(i)}\chi_{k}^{(i)}(x) = \sum_{k=\mu_{m-1}+1}^{v}
\sum_{i=2}^{2^{n_{k}}} a_k^{(i)} r_{k_{i+\nu_{k-1}-1}}(x).
\end{equation}

 We put
 $$
  \sigma_{m}^*(x) = \sup_{1\leq l
\leq 2^{2m}}\bigg| \sum_{j=1}^{l}
\alpha_{j}^{m}\Upsilon_{j}^{m}(x)\bigg|
$$
 $$ = \sup_{1\leq l \leq
2^{2m}}\bigg| \sum_{j=1}^{l} \alpha_{j}^{m} \bigg( \frac1{\sqrt{
2^{2m}+1}} \xi_{m} +
\sum_{i=2}^{2^{2m}+1}\kappa_{ij}^{(2^{2m}+1)}{g}_{m}^{i-2}(x)\bigg)\bigg|.
$$
 By (\ref{cl:1}),(\ref{el:1}) we will have   that   for
\begin{equation}\label{ro:2}
 \rho_{n-1}< m \leq \rho_n \quad \mbox{and}\quad x\in\Delta^{(k_n+2)}_{\nu}, 2\leq \nu \leq
2^{k_n+2}
 \end{equation}
   $$ \sigma_{m}^*(x) =
 \sup_{1\leq l
\leq 2^{2m}} \bigg| \sum_{j=1}^{l} \alpha_{j}^{m} \bigg(
\frac1{\sqrt{2^{2m}+1}} \omega_{\nu}^{m} +
\sum_{i=2}^{2^{2m}+1}\kappa_{ij}^{(2^{2m}+1)}{g}_{m}^{i-2}(x)\bigg)
\bigg|.  $$ 
If we denote by $n_s$ the number that corresponds to $n$   in
the condition (\ref{ro:2}) when $m$ is replaced by $m(s)$ it is easy to observe  (see (\ref{ro:1}),
(\ref{ro:2})) that
\begin{equation*}\label{ro:3}
 m(s) > n_s+2 \quad \mbox{for all}\quad s\geq 4.
 \end{equation*}
 Thus if $x\in\Delta^{(k_{m-1}+2)}_{\nu},$ where $\nu$  is such that
  $ 2^{k_{m-1}-k_{n+1}}\leq \nu \leq 2^{k_{m-1}+2}$
then
$$ \sigma_{m}^*(x) =
 \sup_{1\leq v
\leq 2^{2m}} \bigg| \sum_{j=1}^{v} \alpha_{j}^{m} \bigg(
\frac1{\sqrt{2^{2m}+1}} \omega_{\nu}^{*} +
\sum_{i=2}^{2^{2m}+1}\kappa_{ij}^{(2^{2m}+1)}{g}_{m}^{i-2}(x)\bigg)
\bigg|,
 $$
  where $\omega_{\nu}^{*} = \omega_{\nu_1}^{m}$ for $
\Delta^{(k_{m-1}+2)}_{\nu} \subset \Delta^{(k_n+2)}_{\nu_1}.$

For any $s\in \IN$ we define $\tau_s\in \IN$ so that
\begin{equation}\label{ka:1}
\sqrt{ 2^{\tau_s -1}} {\mathcal M}_{m(s)} < 1 \leq   \sqrt{
2^{\tau_s }} {\mathcal M}_{m(s)},
 \end{equation}
otherwise we put $\tau_s =2.$

Hence by (\ref{ms:1}) we will have that
\begin{equation*}\label{tau:2}
2\leq \tau_s \leq 2s+1 \qquad \mbox{for all }\quad s\in \IN.
 \end{equation*}
Define
 \begin{equation*}\label{ke:1}
E_{s,\nu}= \{x\in\Delta^{(k_{m(s)-1}+2)}_{\nu}:
I_{[2^{-\tau_s},2^{-\tau_s+1}]}(2^{k_{m(s)-1}+2{m(s)}^2+2}x) =1\},
 \end{equation*}
 \[
1 \leq \nu \leq 2^{k_{m(s)-1}+2},
 \]
 where we suppose that the characteristic function $I_E$ is
 extended   with the period $1$ to the whole line.
 Evidently,
 \begin{equation}\label{em:1}
|E_{s,\nu}|= 2^{-\tau_s} |\Delta^{(k_{m(s)-1}+2)}_{\nu}|.
 \end{equation}
Then we write
 \begin{eqnarray*}
  \sigma_{m(s)}^*(x) &\geq& \sup_{1\leq v  \leq  2^{2m(s)}}|
\sum_{j=1}^{v} \alpha_{j}^{m(s)} {g}_{m(s)}^{j-1}(x)|\\ &-&
\delta_{2^{2m(s)}+1}\bigg| \frac{\delta_{2^{2m(s)}+1}^{-1}
\omega_{\nu}^{*} }{\sqrt{ 2^{2m(s)}+1}} - \sum_{j=1}^{2^{2m(s)}}
 {g}_{m(s)}^{j-1}(x)\bigg| \sum_{j=
1}^{2^{2m(s)}}|\alpha_{j}^{m(s)}| \\ &=&{\widehat
\sigma}_{m(s)}^*(x) - {\widehat  R}_{\nu}^{s}(x)\quad
\mbox{if}\quad x\in E_{s,\nu}
\end{eqnarray*}
$$(2^{k_{m(s)-1}-k_{n_s+1}}\leq
\nu \leq 2^{k_{m(s)-1}+2}).$$
 Applying
the equality (\ref{eq:9}) we obtain
\begin{eqnarray*}
\lefteqn{\int_{E_{s,\nu}} {\widehat R}_{\nu}^{s}(x) dx \leq
\sqrt{|E_{s,\nu}|} \left( \int_{E_{s,\nu}} ({\widehat
R}_{\nu}^{s}(x))^2 dx \right)^{\frac{1}{2}}}
 \\ &\leq& 4 {\mathcal M}_{m(s)}\sqrt{|E_{s,\nu}|}\frac{1}
{\sqrt{2^{2m(s)}+1}}\\
&\times&
 \left( \int_{E_{s,\nu}} \bigg( (2^{2m(s)}+1)
(\omega_{\nu}^{*})^{2} + \sum_{j=
1}^{2^{2m(s)}+1}({g}_{m(s)}^{j-1}(x))^2 \bigg) dx
\right)^{\frac{1}{2}}
\\ &\leq& 4|E_{s,\nu}|{\mathcal M}_{m(s)}
\sqrt{(\omega_{\nu}^{*})^{2}+m(s)^{-2}} \\ &\leq& 4
\sqrt{ 2^{-k_{n_s}} +m(s)^{-2}}\, |E_{s,\nu}|\,{\mathcal M}_{m(s)}
\end{eqnarray*}
 for all $ \nu (2^{k_{m(s)-1}-k_{n_s+1}}\leq
\nu \leq 2^{k_{m(s)-1}+2}).$

 The last inequality follows from the conditions
 (\ref{ad:3}),(\ref{xi:1}) and (\ref{cl:1}).
On the other hand by the definition of the functions $\widehat{g}_{n}^i(x)$  and (\ref{eq:7}) we deduce
that for all
$
\nu (2^{k_{m(s)-1}-k_{n_s+1}}\leq \nu \leq
2^{k_{m(s)-1}+2})
$
\begin{eqnarray*}
 &\,&\int_{E_{s,\nu}} {\widehat \sigma}_{{m(s)}}^*(x)
dx\\
 &=& \int_{E_{s,\nu}} \sup_{1\leq v \leq 2^{{2m(s)}}}|
\sum_{j=1}^{v} \alpha_{j}^{{m(s)}}
\widehat{g}_{{m(s)}}^{j-1}(2^{k_{m(s)-1}+ 2m(s)^2 +2}x)| dx\\ &=&
|\Delta^{(k_{m(s)-1}+2)}_{\nu}|
 \int_{2^{-\tau_s}}^{2^{-\tau_s+1}} \sup_{1\leq v \leq
2^{{2m(s)}}}| \sum_{j=1}^{v} \alpha_{j}^{{m(s)}}
\widehat{g}_{{m(s)}}^{j-1}(x)| dx\\
 &=&
m(s)^{-1}2^{\frac{\tau_s}2}|\Delta^{(k_{m(s)-1}+2)}_{\nu}|\\
&\times & \int_{2^{-\tau_s}}^{2^{-\tau_s+1}} \sup_{1\leq v \leq 2^{2m(s)} }|
\sum_{j=1}^{v} \alpha_{j}^{{m(s)}} f_{2m(s)}^{j-1}(2^{\tau_s+2}x-
4)| dx\\
  &=&
m(s)^{-1}2^{-\frac{\tau_s}2}|\Delta^{(k_{m(s)-1}+2)}_{\nu}|
 \int_{0}^{4} \sup_{1\leq v \leq 2^{2m(s)} }| \sum_{j=1}^{v}
\alpha_{j}^{{m(s)}} f_{2m(s)}^{j-1}(y))| dy \\
 &\geq&
2^{\frac{\tau_s}2}\frac{|E_{s,\nu}|}{6} 2^{-m(s)}
 \bigg| \sum_{j=1}^{2^{2m(s)}} \alpha_{j}^{{m(s)}}\bigg|.
 \end{eqnarray*}
  In
order to obtain the last two inequalities we have applied
consecutively Lemma 2 and Lemma 1 of \cite{K:04}. In the last
inequality we applied also the equality (\ref{em:1}).
 We have that
  \begin{eqnarray*}
  |\sum_{j=1}^{2^{2m(s)}} \alpha_{j}^{m(s)}| &\geq& |
\sum_{j=1}^{2^{{2m(s)}}+1} \alpha_{j}^{{m(s)}}| -|
\alpha_{2^{{2m(s)}}+1}^{{m(s)}}|\\ & \geq & \bigg|
\sum_{j=1}^{2^{{2m(s)}}+1} \alpha_{j}^{{m(s)}}\bigg| - {\mathcal
M}_{{m(s)}} \geq \frac{1}{2} \bigg| \sum_{j=1}^{2^{{2m(s)}}+1}
\alpha_{j}^{{m(s)}}\bigg|
\end{eqnarray*}
 for any $ m(s)\geq
N_\epsilon,$ where  $N_\epsilon = 2\log_2 (\frac{8}{\epsilon}).$
 Hence, for any  $ m(s)\geq N_\epsilon,$ we have $$
\int_{E_{s,\nu}} {\widehat \sigma}_{{m(s)}}^*(x) dx \geq
2^{\frac{\tau_s}2}\frac{|E_{s,\nu}|}{12} 2^{-m(s)}
 \bigg| \sum_{j=1}^{2^{2m(s)}+1} \alpha_{j}^{{m(s)}}\bigg|.$$
  Thus if we take ${\widehat N}_\epsilon \geq  N_\epsilon$ so that
$$ 4 \sqrt{2^{-k_{n_s}} +m(s)^{-2}} <
\frac{\epsilon}{96}\qquad\mbox{when}\quad m(s)>{\widehat
N}_\epsilon $$
 then   we will obtain that for all $\nu (2^{k_{m(s)-1}-k_{n_s+1}}\leq \nu \leq 2^{k_{m(s)-1}+2})$
\begin{eqnarray*}
 \int_{E_{s,\nu}} { \sigma}_{{m(s)}}^*(x)
dx & \geq&
 \int_{E_{s,\nu}} {\widehat \sigma}_{{m(s)}}^*(x)
dx  - \int_{E_{s,\nu}} {\widehat R}_{\nu}^{{m(s)}}(x) dx\\
  &\geq& 2^{\frac{\tau_s}2}\frac{|E_{s,\nu}|}{6} 2^{-m(s)}
 \bigg| \sum_{j=1}^{2^{2m(s)}+1} \alpha_{j}^{{m(s)}}\bigg|\\
 &-& 4 \sqrt{2^{-k_{n_s}}+m(s)^{-2}} |E_{s,\nu}|{\mathcal
M}_{{m(s)}}\\ &\geq&
2^{\frac{\tau_s}2}\frac{|E_{s,\nu}|}{96}\epsilon{\mathcal
M}_{{m_s}}.
\end{eqnarray*}
 
Applying the Menshov-Rademacher
theorem
 by  the definition of the functions $\widehat{g}_{n}^i(x)$   and  (\ref{eq:7}) we have that
\begin{eqnarray*}
 &\,&\int_{E_{s,\nu}} |{\widehat
\sigma}_{{m(s)}}^*(x)|^2 dx\\ &=& \int_{E_{s,\nu}} \sup_{1\leq v \leq
2^{2m(s)}}| \sum_{j=1}^{v} \alpha_{j}^{m(s)}
\widehat{g}_{m(s)}^{j-1}(2^{k_{m(s)-1}+ 2m(s)^2 +2}x)|^2 dx\\  &=&
|\Delta^{(k_{m(s)-1}+2)}_{\nu}|
 \int_{2^{-\tau_s}}^{2^{-\tau_s+1}} \sup_{1\leq v \leq
2^{{2m(s)}}+1}| \sum_{j=1}^{v} \alpha_{j}^{{m(s)}}
\widehat{g}_{{m(s)}}^{j-1}(x)|^2 dx\\
 &=&
m(s)^{-2}2^{{\tau_s}}|\Delta^{(k_{m(s)-1}+2)}_{\nu}|\\
&\times&\int_{2^{-\tau_s}}^{2^{-\tau_s+1}} \sup_{1\leq v \leq 2^{2m(s)} }|
\sum_{j=1}^{v} \alpha_{j}^{{m(s)}} f_{2m(s)}^{j-1}(2^{\tau_s+2}x-
4)|^2 dx
\\
&\leq&
 C^2 |\Delta^{(k_{m(s)-1}+2)}_{\nu}| \left(\frac{m(s)+2}{m(s)}\right)^2 {\mathcal
M}_{m(s)}^2 \leq 4C^2|\Delta^{(k_{m(s)-1}+2)}_{\nu}|{\mathcal
M}_{{m(s)}}^2,
\end{eqnarray*}
 where $C>0$ is
an absolute constant.
 Hence, if $m(s)>{\widehat N}_\epsilon$
\begin{eqnarray*} \left(
\int_{E_{s,\nu}} ({\sigma}_{{m(s)}}^* (x))^2 dx
\right)^{\frac{1}{2}} &\leq& \left(\int_{E_{s,\nu}} ({\widehat
\sigma}_{{m(s)}}^*(x))^2 dx \right)^{\frac{1}{2}}\\ +
\left(\int_{E_{s,\nu}} ({\widehat R}_{\nu}^{{s}}(x))^2(x) dx
\right)^{\frac{1}{2}} &\leq& 2C
|\Delta^{(k_{m(s)-1}+2)}_{\nu}|^{\frac{1}{2}} {\mathcal
M}_{{m(s)}}\\ +
|\Delta^{(k_{m(s)-1}+2)}_{\nu}|^{\frac{1}{2}}{\mathcal M}_{{m(s)}}
&\leq&  (2C +1) |\Delta^{(k_{m(s)-1}+2)}_{\nu}|^{\frac{1}{2}}
{\mathcal M}_{{m(s)}}
\end{eqnarray*} for any
$\nu (2^{k_{m(s)-1}-k_{n_s+1}}\leq \nu \leq 2^{k_{m(s)-1}+2}).$

 For the function  ${\sigma}_{{m(s)}}^*$ we
apply Lemma C of \cite{K:04} (see also \cite{K:93}, p.8)
 to estimate the
Lebesgue measure of the set $$ E_{s,\nu}^{*} = \bigg\{ { x}\in
E_{s,\nu}: {\sigma}_{{m(s)}}^*(x) \geq
{\frac{2^{\frac{\tau_s}2}\epsilon}{200}}{\mathcal M}_{{m(s)}}
\bigg\}.$$
 After normalizing the Lebesgue measure on the
set $E_{s,\nu}$ and easily computing the increase of the norms $L^1$ and
$L^2$  we will obtain that for any $\nu
(2^{k_{m(s)-1}-k_{n_s+1}}\leq \nu \leq 2^{k_{m(s)-1}+2}),$ ${m(s)}\in
\Omega_{\epsilon}, {m_s} \geq {\widehat N}_\epsilon$ 
$$
|E_{s,\nu}|^{-1} |E_{s,\nu}^{*}|  \geq \bigg(
\frac{2^{\frac{\tau_s}2}|E_{s,\nu}|^{\frac{1}{2}}\epsilon}{400(2C+1)
|\Delta^{(k_{m(s)-1}+2)}_{\nu}|^{\frac{1}{2}}}\bigg)^2$$ 
$$ =
\bigg( \frac{\epsilon}{400(2C+1)}\bigg)^2 = C_\epsilon>0. $$
By
(\ref{om:22}) and (\ref{om:23}) we observe (see
(\ref{ser:11}),(\ref{ser:12})) that for any $t\in \IR$
 $$ \bigg| \bigg\{ { x}\in
E_{s,\nu}:  S_m^{v}(x)
> t \bigg\}\bigg| = \bigg\{ { x}\in E_{s,\nu}:  S_m^{v}(x) - 2\Psi_m^{v}(x)
> t \bigg\}\bigg|.
$$ Hence if we define
\[
S_{m}^*(x) = \sup_{\mu_{m-1}+1\leq v \leq \mu_m} |S_m^{v}(x)|
\]
then we obtain that the measure of the set $$ \hat{E}_{s,\nu} =
\bigg\{ { x}\in E_{s,\nu}: S_{{m(s)}}^*(x) \geq
{\frac{2^{\frac{\tau_s}2}\epsilon}{200}}{\mathcal M}_{{m(s)}}
\bigg\}$$ is greater than or equal $\frac12 |E_{s,\nu}^{*}|.$

The
sequence of partial sums (\ref{ser:0}) diverges a.e. on the set
 $$E= \limsup_{s} \bigcup_{\nu
=2^{k_{m(s)-1}-k_{n_s+1}}}^{2^{k_{m_s-1}+1}} \hat{E}_{s,\nu} $$
 when $j\to +\infty.$
 By (\ref{em:1}) and
(\ref{ka:1}) it follows that for any dyadic interval
$$
\Delta^{(k_{m(s)-1}+2)}_{\nu}\,(2^{k_{m(s)-1}-k_{n_s+1}}\leq \nu
\leq 2^{k_{m(s)-1}+2})
$$
\begin{equation*}\label{div:1} \bigg|\bigg\{ { x}\in
\Delta^{(k_{m(s)-1}+2)}_{\nu}: S_{m}^*(x) \geq
{\frac{\epsilon}{200}} \bigg\}\bigg| \geq C_\epsilon
2^{-\tau_s-1}|\Delta^{(k_{m(s)-1}+2)}_{\nu}|
\end{equation*}
$$ \geq {\frac{1}{4}}C_\epsilon {\mathcal
M}^2_{{m_s}}|\Delta^{(k_{m(s)-1}+2)}_{\nu}|. $$ Hence one easily
derives that  $|E|=1.$

\

\subsubsection{ The case  $\sum_{m\in \Omega_{\epsilon}} {\mathcal
M}_{m}^{2} < +\infty$ { for all} $ \epsilon\in (0,1]$} The proof
of this part is similar to the proof given in \cite{K:04}.  In
this case we have that
\begin{equation}\label{eq:13} \sum_{m\in
\Omega^c_{\epsilon}} {\mathcal M}_{m}^{2} = +\infty \quad \forall
\epsilon\in (0,1].
\end{equation}
 Let $\{
m_{j}(\epsilon)\}_{j=1}^{\infty} = \Omega^c_{\epsilon},$ where
\[
m_1(\epsilon) < m_2(\epsilon) < \cdots < m_j(\epsilon) <
m_{j+1}(\epsilon) < \cdots
\]
 and
 study two subcases.

a) {\it When  $\limsup_{j} {\mathcal M}_{m_j(\epsilon)} >0$ for
some $\epsilon \in (0,1]);$}

b)  {\it When $\lim_{j\to \infty } {\mathcal M}_{m_j(\epsilon)}=0$
for any $\epsilon \in (0,1].$ }

In the case a) the reader must take into account the conditions
(\ref{om:2}) and (\ref{om:23}) to assure that the Rademacher
functions that appear between the functions $\Upsilon_{j }
(\mu_{m-1}+1\leq j\leq  \mu_m)$ do not affect on the proof.

The proof of Theorem \ref{t:12} in the case b) is also similar to
the proof given in \cite{K:04} for the corresponding case. Here
one should use the  conditions (\ref{om:2}), (\ref{om:23}) and
Proposition \ref{clm:1} to guarantee that the same arguments work. Thus
the proof of Theorem \ref{t:12} is finished.

\

\section{ Construction of the system $\Theta$}

As we have explained in Section \ref{ubos:1} we will obtain the
system $\Theta$ with the help of corresponding orthogonal
transformations. It is easy to check that dissolution process
will work if instead of  the matrix (\ref{m:1}) one takes any
$2\times2$ orthogonal matrix with elements
by modulus strictly less than one. The advantage of the matrix (\ref{m:1}) resides,
particularly,  on the fact that the elements of the first row of
the resulting matrix are equal. In fact if we apply the process
$2^n -1$ times then we will obtain the Haar matrix. But for our
purposes we do not need such details.

We define
\begin{equation}\label{ome:13}
\theta_{i}(x) = \chi_{0}^{(i)}(x), \quad  1 \leq i\leq \nu_0.
\end{equation}
Afterwards any block of functions $ \chi_{j}^{(i)}(x) (1 < i \leq
2^{n_j})$ will be transformed by the corresponding orthogonal matrix $H_{n_j}$.

We define
\begin{equation}\label{ome:14}
\theta_{j}^{(i)}(x) = \sum_{k=1}^{2^{n_j}} a_{ki}^{(j)}
\chi_{j}^{(k)}(x) \quad   1\leq i \leq 2^{n_j}; j\in \IN
 \end{equation}
(see \eqref{eq:hm1}) and denote
\begin{equation}\label{ome:15}
 \theta_{k}(x)=
 \theta_{j}^{(i)}(x), \qquad \mbox{where}\quad  k= \varpi_{j-1}  +i;
 \end{equation}
\[
 1 \leq i \leq 2^{n_{j}}, j\in \IN \quad \mbox{and}\quad
\varpi_{j} = \nu_0 + \sum_{l=1}^{j} 2^{n_l}.
\]
By
(\ref{ome:13})-(\ref{ome:15}) and the definition of the orthogonal
matrices $H_j$ we obtain that $\Theta = \{
\theta_k(x)\}_{k=1}^{\infty}$ is a complete ONS and (\ref{ub:1})
holds. Moreover, we also have  that for any collection of
coefficients $\{ c_i\}_{i=1}^{2^{n_j}}$ and any $j\in \IN$
\[
\sum_{i=1}^{2^{n_j}} c_i \theta_{j}^{(i)}(x) =
\sum_{i=1}^{2^{n_j}} b_i \chi_{j}^{(i)}(x),
\]
where
\[
\sum_{i=1}^{2^{n_j}} |c_i|^2 = \sum_{i=1}^{2^{n_j}} |b_i|^2.
\]
Hence from Theorem \ref{t:12}  follows that  $\Theta $ is a
divergence system. To show that the system $\Theta $ is a
convergence system we observe that the system
\[
 \theta_{j}^{(i)}(x) - 2^{-\frac{n_j}2} \chi_{j}^{(1)}(x)  \quad   1\leq i \leq 2^{n_j}; j\in \IN
\]
is an $S_p$ system for any $p>2.$ From (\ref{om:2}) we decompose
any function $\theta_k(x), k> \nu_0$ in the following form:
\[
\theta_{j}^{(i)}(x)=  2^{-\frac{n_j}2} \Upsilon_j(x) +
\big[\theta_{j}^{(i)}(x) - 2^{-\frac{n_j}2} \chi_{j}^{(1)}(x)\big]
\quad 1\leq i \leq 2^{n_j}; j\in \IN
\]
Hence by Theorem \ref{t:10} and Proposition \ref{pro:1} we easily
obtain that $\Theta$ is a convergence  system. Proof of Theorem
\ref{t:1} is finished.

\newpage


\begin{thebibliography}{99}
 	
\bibitem{A:61}
G. Alexits,  Convergence problems of orthogonal series, Budapest, 1961.


\bibitem{C:64}
L. Carleson,  On convergence and growth of partial sums of
Fourier series, Acta Math. 116(1964), 135--157.



\bibitem{G:72}
V.F. Gaposhkin,   Lacunary series and independent
functions (Russian) Uspehi Mat. Nauk, 1966, 21,  no. 6(132),
3--82; English trans. in Russian Math. Surveys 21 (1966),1-82.

\bibitem{KZ:35}
S. Kaczmarz,  H. Steinhaus,  Theorie der Orthogonalreihen,
Warszaw-Lw\'ow, Monografje Matematiczne,  1930.

\bibitem{K:93}
J.-P. Kahane,   Some random series of functions, Cambridge
studies in advanced mathematics 5,Cambridge University Press, London, 1993.

\bibitem{Ka:76}
B.S. Kashin,  A certain complete orthonormal system, Mat.
Sb. 99(141) (1976), no. 3, 356--365(Russian).
 English trans. in Math. USSR Sbornik 28(1976), 315--324.


\bibitem{K:04}
K. Kazarian,   A complete orthonormal system of divergence, Jounal
of Functional Analysis, 214,2 (2004), 284--311.



\bibitem{K:06}
K.S. Kazarian,  A problem of Ul'yanov, Matem. Sbornik
197:12 95--116 (2006) (Russian).
 English transl. Sbornik: Mathematics 197:12 1805--1826(2006).


\bibitem{Ko:13} V. Kostitzin,  Quelques remarques sur les sist\`{e}mes complets de fonctions orthogonales,
C.R. Acad. Sci. Paris,  156, (1913),  292--295.

\bibitem{M:41}
D. Menchoff (D.E. Menshov), Sur les repr\'esentation des
fonctions mesurables par des s\'eries trigonom\'etriques. Rec.
Math. [Mat. Sbornik] N. S. 9 (51), (1941), 667--692.

\bibitem{O:66}
A.M. Olevskii, On an  orthonormal system and its applications,
Mat. Sb. 71(113) (1966), 297--336(Russian).
 English transl. in Amer. Math. Soc. Trans. (2)(1968).

\bibitem{S:20}
H. Steinhaus,  An example of a thoroughly divergent orthogonal development. Proc. London
Math. Soc. 2, 20) (1920), 123--126.




\bibitem{Ul:64}
P.L. Ulyanov,  Solved and unsolved problems in the theory of
trigonometric and orthogonal series. (Russian) Uspehi Mat. Nauk,   no. 1 (115), 19 (1964), 3--69; English transl. in Russian Math. Surveys 19 (1964).



\bibitem{Zy:59}
A. Zygmund,  Trigonometric series, v. 1 and 2, Cambridge Univ.
Press, 1959.


\end{thebibliography}
\end{document}